\documentclass[12pt]{amsart}
\usepackage{amsfonts}
\usepackage[utf8]{inputenc}
\usepackage{amssymb, amstext, amscd, amsmath}
\usepackage{amsthm}
\usepackage{times}
\usepackage{txfonts}
\usepackage{indentfirst}
\usepackage{subfigure}
\usepackage{graphicx}
\usepackage{mathrsfs}
\usepackage{geometry}

\usepackage{url}
\usepackage[colorlinks = true,
			linkcolor=blue,
			urlcolor=blue,
			citecolor=blue,
			anchorcolor=blue,
			backref=page]{hyperref}
\usepackage{color,xcolor}

\usepackage{color,xcolor}


\geometry{left=3.5cm,right=3.5cm,top=2.5cm,bottom=2.5cm}

\theoremstyle{theorem}
\newtheorem{thm}{Theorem}[section]
\newtheorem{cor}[thm]{Corollary}
\newtheorem{prop}[thm]{Proposition}
\newtheorem{lem}[thm]{Lemma}

\theoremstyle{definition}
\newtheorem{dfn}[thm]{Definition}
\newtheorem*{prb}{Problem}

\numberwithin{equation}{section}

\begin{document}

\title[Amalgamations along surfaces with boundary in a handlebody]{Amalgamations along surfaces with boundary \\ in a handlebody}

\author{Siqi Ding}
\address{School of Mathematical Sciences, Dalian University of Technology, Dalian 116024, CHINA}
\email{sqding@yeah.net}

\author{Fengchun Lei}
\address{School of Mathematical Sciences, Dalian University of Technology, Dalian 116024, CHINA \&  BIMSA}
\email{fclei@dlut.edu.cn}

\author{Wei lin}
\address{Fujian Normal University}
\email{linwei201208@fjnu.edu.cn}

\author{Andrei Vesnin}
\address{Sobolev Institute of Mathematics, Novosibirsk, Russia}
\email{vesnin@math.nsc.ru}



\begin{abstract}
Let $M$ be a connected orientable 3-manifold, and $F$ a compact connected orientable surface properly embedded in $M$. If $F$ cuts $M$ into two connected 3-mani\-folds $X$ and $Y$, that is, $M=X\cup_F Y$, we say that $M$ is an \textit{amalgamation} of $X$ and $Y$ along $F$;  and if $F$ cuts $M$ into a connected 3-manifold $X$, we say that $M$ is a \textit{self-amalgamation} of $X$ along $F$. A characterization of an amalgamation of two handlebodies along a surface, incompressible in both, to be a handlebody was obtained by Lei, Liu, Li, and Vesnin. The case of amalgamation of two handelbodies along a compressional surface was studdied by Xu, Fang, and Lei.  In the present paper, a characterization of an amalgamation and self-amalgamation of a handlebody to be a handlebody is given.
\end{abstract}

\thanks{S.~D., F.~L. and W.~L. supported in part by a grant (No.12071051) of NSFC; A.~V. supported by the State Task to the Sobolev Institute of Mathematics (No. FWNF-2022-0004)}
\subjclass[2010]{57N10}
\keywords{Heegaard splitting; $H'$-splitting; handlebody; incompressible surface; primitivity; quasi-primitivity}

\maketitle



\section{Introduction} \label{sec1}

Let $M$ be a compact connected orientable 3-manifold, and $F$ a compact connected orientable surface properly embedded in $M$. Denote by $\mathcal M$ the manifold obtained by cutting $M$ along $F$. We say that $M$ is an \textit{amalgamation} of $\mathcal M$ along $F$, and $F$ is an \textit{amalgamatimg surface}. If $F$ is a separating surface in $M$, then $\mathcal M$ contains two components $M_1$ and $M_2$ and we write $M=M_1 \cup_F M_2$. In particular, when both $M_1=H_1$ and $M_2=H_2$ are handlebodies, we say that $M = H_1\cup_F H_2$ is a \textit{$H'$-splitting} of $M$; moreover, if $H_1$ and $H_2$ are of the same genus and $F = \partial H_1 = \partial H_2$ then $M = H_1 \cup_F H_2 $ is a Heegaard splitting of a closed 3-manifolds $M$ and $F$ is a Heegaard surface.  If $F$ is a non-separating surface in $M$, then $\mathcal M$ is connected, we  say that $M$ is a \textit{self-amalgamation} of $\mathcal M$ along the two cutting sections of $F$ lying in $\partial \mathcal M$ and $F$ is a \textit{self-amalgamating surface}.

In recent years, many studies have been done on the topics on amalgamations or self-amalgamations of 3-manifolds or Heegaard splittings, see for examples, \cite{D-Q, E-L, L-L, LT, L-L-L, L-L-L2, M-L-L, ZDGQ}.
It is widely known that any closed connected orientable 3-manifold admits a Heegaard splitting. It was shown in~\cite{Gao} that any compact connected 3-manifolds admits an $H'$-splitting.
In the present paper we address the following problem:
\begin{prb}
When do amalgamations of handlebodies become a handlebody?
\end{prb}

A necessary and sufficient condition for a  $H'$-splitting $H_1\cup_F H_2$ to be a handlebody has been given in~\cite[Theorem~3.2]{lei2020necessary}, where $F$ is incompressible in both $H_1$ and $H_2$. This result was extended in~\cite{XFL} for the case that $F$ could be compressible in $H_1$ (or $H_2$) and incompressible in $H_2$ (or $H_1$), or compressible in both $H_1$ and $H_2$.

In the present paper we provide a characterization of a self-amalgamation of a handlebody to be a handlebody. The main result if the following theorem.

\begin{thm} \label{thm1.1}
Let
\begin{itemize}
\item[(1)] $\mathcal{H} = \{H_1, H_2\}$ be a collection of handlebodies and $M = \mathcal H_F = H_1 \cup_F H_2$ an amalgamation of $\mathcal H$ along $F$, or
\item[(2) ]$\mathcal{H} = \{H\}$ be a handlebody $H$ and $M=\mathcal{H}_F$ a self-amalgamation of $\mathcal{H}$ along $F$.
\end{itemize}
Assume that $F$ is incompressible in $M$ and $F$ is not a disc.
Then $M$ is a handlebody if and only if there exists a collection $\mathcal{J}=\{J_1,\ldots,J_p\}$ of pairwise disjoint simple closed curves on $F$ and a collection ${\mathcal D}=\{D_1,\ldots,D_p\}$ of pairwise disjoint disks properly embedded in $\mathcal{H}$ such that $|J_i \cap \partial D_j|=\delta_{ij}$ for $1\leq i, j \leq p$, where $p=\operatorname{rank}(\pi_1(F))$.
\end{thm}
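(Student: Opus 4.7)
My plan is to treat the two cases in parallel, noting that case~(1) reduces to \cite[Theorem~3.2]{lei2020necessary}: an innermost-disk argument shows that the hypothesis ``$F$ is incompressible in $M$'' forces $F$ to be incompressible in each of $H_1$ and $H_2$, whereupon the cited theorem applies verbatim. The new content therefore lies in case~(2), the self-amalgamation case, on which I focus below.

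\textbf{Sufficiency.} Given $\mathcal{J}$ and $\mathcal{D}$ with $|J_i\cap\partial D_j|=\delta_{ij}$, I would exhibit a handle structure on $M$ by analysing the regular neighbourhood $N=N_M(F\cup D_1\cup\cdots\cup D_p)$. Because $p=\operatorname{rank}(\pi_1(F))$ and the $D_i$ are dual to the $J_i$, the $2$-complex $F\cup\bigcup_i D_i$ collapses onto a graph, so $N$ is a handlebody. It remains to show $\overline{M\setminus N}$ is a disjoint union of $3$-balls: cutting $H$ along $\mathcal{D}$ produces a handlebody $H'$ on whose boundary the cut-up image of $F$ sits, and the rank condition together with incompressibility of $F$ forces the complement of an open collar of this image in $H'$ to consist of $3$-balls. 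Gluing the balls back to $N$ yields that $M$ is a handlebody.

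\textbf{Necessity.} Conversely, assume $M$ is a handlebody. Fix a complete meridian disk system $\mathcal{E}=\{E_1,\ldots,E_g\}$ of $M$ and isotope it to minimize $|F\cap(E_1\cup\cdots\cup E_g)|$. Incompressibility of $F$ in $M$ eliminates all closed curves of intersection via innermost-circle surgery, so $F\cap E_k$ is a union of arcs for each $k$, and cutting $M$ along $\mathcal{E}$ yields a $3$-ball in which $F$ becomes a union of $2$-disks. The dual graph of the resulting cell decomposition of $F$ has first Betti number $p$, and a cycle basis of that graph is realised as $p$ disjoint simple closed curves $J_1,\ldots,J_p$ on $F$ generating $\pi_1(F)$. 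For each $J_i$ a dual disk $D_i$ is then extracted from a suitable subdisk of some $E_k$ cut along $F$, so that $\partial D_i$ crosses $F$ only along an arc meeting $J_i$ once and no other $J_j$.

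\textbf{Main obstacle.} The delicate step is producing the $D_i$ in case~(2). In case~(1) each subdisk of $E_k$ cut along $F$ lies unambiguously in $H_1$ or $H_2$, so the candidate dual disks are automatically properly embedded in $\mathcal{H}$. In case~(2) all subdisks live in the single handlebody $H$, and a single arc $\alpha\subset E_k\cap F$ has two preimages $\alpha^+\subset F^+$ and $\alpha^-\subset F^-$ on the two copies of $F$ in $\partial H$ that are identified to form $M$. Two subdisks that appear to join along $F$ may instead join across this self-identification, so extracting the $D_i$ as disks properly embedded in $H$ (and not merely in $M$) while maintaining the exact dual intersection pattern will require an outermost-arc induction on $|\mathcal{E}\cap F|$ adapted to self-identifications, combined with the standard primitivity arguments for handlebodies.
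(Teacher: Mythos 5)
Your outline diverges substantially from the paper's argument, and the most serious problem is in the sufficiency direction of case~(2). The claim that $\overline{M\setminus N}$, where $N=N_M(F\cup D_1\cup\cdots\cup D_p)$, ``consists of $3$-balls'' is not justified and is false in general: cutting $H$ along the $p$ pairwise disjoint disks of $\mathcal{D}$ produces handlebodies whose total genus is roughly $g(H)-p$, and nothing in the hypotheses forces these to be balls (take $g(H)$ large and $p=\operatorname{rank}(\pi_1(F))$ small). Worse, even if one only knows the complementary pieces are handlebodies, gluing handlebodies onto a handlebody along subsurfaces of their boundaries need not yield a handlebody --- that is exactly the phenomenon this theorem is trying to control, so the argument is circular at the decisive step. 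The paper avoids this entirely by inducting on the complexity $C(F)=2g+b-1$: the JD-pair condition forces $D_p\cap F$ to be a single essential arc, so $D_p$ is a $\partial$-compression disk; performing the $\partial$-compression drops $C(F)$ by one, the truncated pair $(\mathcal{J}',\mathcal{D}')$ is a JD-pair for the new surface, and the base case $C(F)=1$ (an annulus) is Theorem~\ref{tt3.9}. Your necessity sketch has the same shape of difficulty: the existence of $p$ \emph{pairwise disjoint} curves $J_i$ and $p$ \emph{pairwise disjoint} disks $D_i$ realizing exactly the $\delta_{ij}$ pattern is the entire content of the statement, and ``a cycle basis of the dual graph is realised as $p$ disjoint simple closed curves'' together with ``a dual disk is extracted from a suitable subdisk'' asserts rather than proves it; you correctly identify the self-identification issue in your ``Main obstacle'' paragraph but do not resolve it. The paper again resolves it by induction: an outermost subdisk $\Delta$ of the minimized disk system is a $\partial$-compression disk meeting $F$ in one essential arc $\gamma$, the pair $(\alpha,\Delta)$ with $|\alpha\cap\Delta|=1$ supplies one entry of the JD-pair, and the inductive hypothesis applied to the $\partial$-compressed surface $F'$ (with a case division according to whether $\gamma$ separates $F$ and whether the resulting pieces separate $M$) supplies the rest.

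Two smaller points on case~(1). The direction you invoke an innermost-disk argument for is the trivial one: a compressing disk for $F$ inside $H_i$ is already a compressing disk in $M$, so incompressibility in $M$ immediately gives incompressibility in each $H_i$ (the innermost-disk argument is needed for the converse, cf.\ Proposition~\ref{pp3.2}). And Theorem~\ref{t2.9} does not apply ``verbatim'': it assumes $g(X),g(Y)\ge 2$ and its conclusion is a basis curve set with a partition primitive in $X$ and $Y$, which is related to but not literally the JD-pair condition $|J_i\cap\partial D_j|=\delta_{ij}$ (one direction of the translation is Lemma~\ref{lem14}; the other requires producing pairwise disjoint dual disks from primitivity). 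The paper instead proves both cases uniformly by the single complexity induction.
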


In particular, collection $\mathcal{J}=\{J_1,\ldots,J_p\}$ is primitive in~$\mathcal{H}$ , see definition~\ref{d2.8}.

The paper is organized as follows. In Section~\ref{sec2} we recall basic definitions and facts from topology of 3-manifolds.  In Sections~\ref{sec3} and~\ref{sec4} we develop a theory of incompressible and compressible surfaces in a handlebody, see Theorem~\ref{tt3.3}, Theorem~\ref{tt3.3}$'$ and Corollary~\ref{cc3.7}. In Section~\ref{sec5} we discuss the case when the amalgamating surface is an annulus, see Theorem~\ref{tt3.9}. In section~\ref{sec6} we give a proof of Theorem~\ref{thm1.1}. Finally, in Section~\ref{sec7} we prove the existence of maximal system of compression discs, see Theorem~\ref{tt3.11}.

\section{Preliminaries} \label{sec2}

In this section, we recall necessary definitions and fundamental results about 3-manifolds that will be used below. The concepts and terminologies that are not defined in the paper are all standard in 3-manifold topology, see~\cite{hatcher2007notes, hempel20043, jaco1980lectures}, and combinatorial group theory, see~\cite{lyndon1977combinatorial}. All surfaces and 3-manifolds considered in the paper are assumed to be orientable. For a $(k-1)$-dimensional sub-manifold $Y$ properly embedded in a $k$-manifold $X$ with a regular neighbourhood $N(Y)$ in $X$ ($k=2$ or 3), we use $X\setminus Y$ to denote the $n$-manifold $\overline{X-N(Y)}$, and call it the manifold obtained by cutting $X$ open along $Y$.

Let $M$ be a compact connected orientable 3-manifold with boundary $\partial M$. If there exists a collection $\mathcal{D}=\{D_1,\ldots,D_n\}$ of $n\geq 0$ pairwise disjoint disks properly embedded in $M$, such the 3-manifold obtained by cutting $M$ open along $\mathcal{D}$ is a 3-ball, we call $M$ a \textit{handlebody}, and call $n$ the \textit{genus} of $M$, with notation $g(M)=n$.

Let $F$ be a compact surface, and $\alpha$ a simple arc or a simple closed curve properly embedded in $F$. We say that $\alpha$ is \textit{trivial} if $\alpha$ cuts out of a disk from $F$. Otherwise it is \textit{non-trivial}. If a simple closed curve $\alpha$ in $F$ is neither trivial nor boundary parallel in $F$, we say $F$ is \textit{essential}.

A $(k-1)$-dimensional sub-manifold $Y$ properly embedded in a connected $k$-manifold $X$ is \textit{separating} if $X\backslash Y$ is not connected; otherwise it is \textit{non-separating}.

Let $M$ be a 3-manifold. Suppose $F$ is a subsurface of $\partial M$ or a surface properly embedded in $M$. If either
\begin{itemize}
\item[(1)] $F$ is a 2-sphere bounding a 3-ball in $M$, or $F$ is a disk lying in $\partial M$, or $F$ is a disk properly embedded in $M$ which cuts out of a 3-ball from $M$, or
\item[(2)] there is a non-trivial simple closed curve $\alpha$ in $F$ that $\alpha$ bounds a disk $D$ in $M$ with $D\cap F=\partial D$,
\end{itemize}
then $F$ is said to be \textit{compressible} in $M$, and $D$ is called a \textit{compression disk}. We say that $F$ is \textit{incompressible} in $M$ if $F$ is not compressible in $M$. The surface $F$ in (1) is also called a \textit{trivial compressible surface}.

Let $M$ be a 3-manifold, and $F$ a properly embedded surface in $M$. We say that $F$ is \textit{boundary compressible} (or \textit{$\partial$-compressible}) in $M$, if
\begin{itemize}
\item there is a disk $\Delta$ in $M$ such that $\Delta\cap F=\alpha$ is an essential arc in $F$, and
\item $\Delta\cap\partial M=\beta$ is a simple arc in $\partial M$,
\item with $\alpha \cap \beta=\partial\alpha=\partial\beta$ and $\alpha \cup \beta=\partial\Delta$.
\end{itemize}
$F$ is said to be \textit{$\partial$-incompressible} if it is not $\partial$-compressible in~$M$.

\begin{dfn}
Let $M$ be a compact connected oriented 3-manifold with non-empty boundary $\partial M$, and $F$ a compact orientable surface in $M$ such that each component $F_i$ of $F$ is a surface properly embedded in $M$ with non-empty boundary, $1\leq i\leq p$. Assume that the 3-manifold $\mathcal M$, obtained by cutting $M$ open along $F$, has $q$ connected components $M_1,\ldots, M_q$. Write $\mathcal{M}=\{M_1,\ldots, M_q\}$, and denote the two cutting sections of each $F_i$ by $F_i'$ and $F_i''$, respectively, $1\leq i\leq p$. Then $M$ can be recovered by gluing $M_1,\ldots, M_q$ together through $\mathcal F = \{f_j \mid 1\leq j\leq p\}$, where each $f_j:F_j'\to F_j''$ is an orientation-reversing homeomorphism, $1\leq j\leq p$.
Manifold $M$ is called an \textit{amalgamation} of $\mathcal{M}$ via $\mathcal F$, and is denoted by $\mathcal{M}(\mathcal F)$. We call $F$ an \textit{amalgamated} surface of $\mathcal{M}$ (or in $M$). Manifold $M$ can also be denoted by $\mathcal{M}_F$.

In particular, we get:
\begin{itemize}
\item If $F$ is connected and separating in $M$, then $F$ cuts $M$ into $M_1$ and $M_2$, we say that $M$ is an \textit{amalgamation} of $M_1$ and $M_2$ along $F'$ and $F''$. If both manifolds are handlebodies, $M_1 = H_1$ and $M_2= H_2$, then  $F$ is a \textit{$H'$-surface} in $M$ and $H_1\cup_F H_2$ \textit{is} a \textit{$H'$-splitting} for $M$. Moreover, if $F=\partial H_1=\partial H_2$, then $F$ is a Heegaard surface in $M$ and $H_1\cup_F H_2$ is a \textit{Heegaard splitting} for $M$.
\item If $F$ is connected and non-separating in $M$, then $F$ cuts $M$ into a connected 3-manifold $\mathcal M = \{ M_1 \}$, we say that $M$ is an \textit{self-amalgamation} of $\mathcal M$ along $F'$ and $F''$.
\end{itemize}
\end{dfn}

\begin{dfn}
Let $M$ be a compact connected orientable 3-manifold, and $F$ a compact connected orientable (equivalently, 2-sided) surface properly embedded in $M$. Suppose that there exist compression disks $D_1$ and $D_2$ of $F$ which are lying in the two sides of $F$, respectively.
\begin{itemize}
\item If $\partial D_1=\partial D_2$, then we say that $F$ is \textit{reducible}; otherwise it is \textit{irreducible};
\item if $\partial D_1\cap\partial D_2=\emptyset$, then we say that $F$ is \textit{weakly reducible}; otherwise it is \textit{strongly irreducible};
\item if $\partial D_1$ intersects $\partial D_2$ in a single point, then we say that $F$ is \textit{stabilized}; otherwise it is \textit{unstabilized}.
\end{itemize}
\end{dfn}

Examples of reducible, weakly reducible and stabilized surfaces are presented in Fig.~\ref{fig1}.
\begin{figure}[!htb]
	\centering
\includegraphics[height=3.8cm]{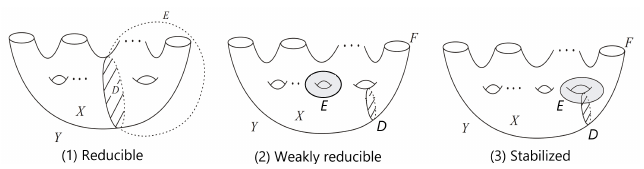}  	
	\caption{Reducible, weakly reducible and stabilized surfaces $F$.}
	\label{fig1}
\end{figure}

Let $H$ be a handlebody of genus $g(H)$, and $\ell$ a simple closed curve in $\partial H$. If there exists an essential disk $D$ of $H$ such that $\ell$ intersects $\partial D=m$ in a single point, we call $\ell$ a \textit{longitude} of $H$, $m$ a \textit{meridian curve} with respect to 
the longitude $\ell$, and $(\ell,m)$ a pair of longitude and meridian curves (or shortly, a \textit{LM-pair}). Let $T_1$ be a regular neighborhood of $\ell \cup m$ in $\partial H$. Then $T_1$ is a once-punctured torus in $\partial H$, and $\gamma = \partial T_1$ bounds a separating disk $D_{(\ell,m)}$ in $H$, which cuts $H$ into a solid torus $T$ (with a LM-pair $(\ell,m)$) and a handlebody of genus $g(H)-1$, see Fig.~\ref{fig2}. We call $D_{(\ell,m)}$ an \textit{associated disk} to the LM-pair $(\ell,m)$.

\begin{figure}[!htb]
	\centering
	\includegraphics[width=0.6\textwidth]{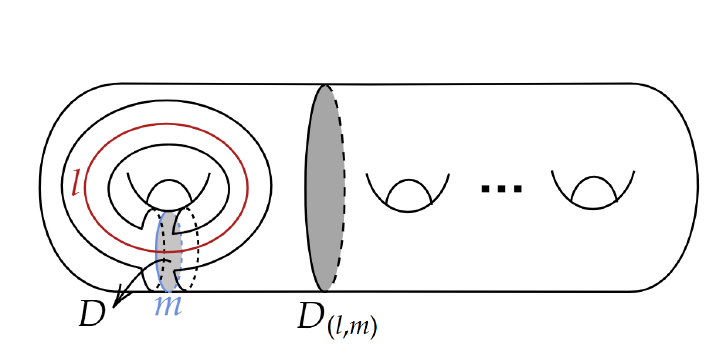}
	\caption{LM-pair $(\ell,m)$. }
	\label{fig2}
\end{figure}

\begin{prop} \label{pp2.3}
Let $M$ be a handlebody of positive genus, and $F$ a connected  incompressible surface in $M$ which cuts $M$ into a collection of handlebodies $\mathcal{H}$. Assume that $F$ is neither a disk nor an annulus, and $\Delta$ is a $\partial$-compressing disk of $F$ in $M$ such that $\Delta\cap F=\gamma$ is an essential arc in $F$ which is non-separating in $F$. Let $H$ be the component of $\mathcal{H}$ in which $\Delta$ is lying and $\beta=\partial\Delta$, $\alpha$ a simple closed curve in $F$ with $|\alpha\cap\gamma|=1$, and  $D_{(\alpha,\beta)}$ the associated disk to the LM-pair $(\alpha,\beta)$. After an isotopy of $D_{(\alpha,\beta)}$ in $H$,  $\partial D_{(\alpha,\beta)}$ is in general position with $F$ and $|F\cap\partial D_{(\alpha,\beta)}|$ is minimal, and denote $\delta= F\cap\partial D_{(\alpha,\beta)}$.
\begin{itemize}
\item[(1)] If $\alpha$ is boundary parallel in $F$, then $D_{(\alpha,\beta)}$ cuts $H$ into a solid torus $T$ with $\alpha\cup\beta\subset \partial T$ and a handlebody $H'$, moreover $\delta$ is a simple arc which cuts $F$ into an annulus $A\subset \partial T$ and a non-disk surface $S\subset \partial H'$;
\item[(2)] If $\alpha$ is non-boundary parallel in $F$, then $D_{(\alpha,\beta)}$ cuts $H$ into a solid torus $T$ with $\alpha\cup\beta\subset \partial T$ and a handlebody $H'$, moreover $\delta$ contains two simple arcs which cuts $F$ into an annulus $A\subset \partial T$ and a non-disk surface $S\subset \partial H'$ (possibly, non-connected).
In particular, $(\alpha\cup\beta) \cap S=\emptyset$.
\end{itemize}
\end{prop}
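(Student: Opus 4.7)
The plan is to prove the proposition in three stages: (i) the structural splitting of $H$, (ii) a generic count of $\delta$, and (iii) case-specific minimization and identification of $A$ and $S$.

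For (i), the decomposition $H = T \cup_{D_{(\alpha,\beta)}} H'$ with $T$ a solid torus containing $\alpha \cup \beta$ on its boundary and $H'$ a handlebody of genus $g(H)-1$ is an immediate consequence of the definition of the associated disk to an LM-pair, as recalled in the Preliminaries: $D_{(\alpha,\beta)}$ bounds the once-punctured torus $P = N(\alpha \cup \beta; \partial H)$ on $\partial H$, so $T$ may be realized as the regular neighborhood $N(\alpha \cup \Delta; H)$, with $\alpha \cup \beta \subset P \subset \partial T$. For (ii), I would observe that $\partial D_{(\alpha,\beta)}$ is, up to isotopy in $\partial H$, the boundary $\partial P$, which tracks parallel copies of $\alpha$ and $\beta$ in the commutator pattern; because $\alpha \subset \operatorname{int}(F)$ and $\beta \cap \partial F = \partial \gamma$ is two points, $\partial D_{(\alpha,\beta)}$ in general position meets $\partial F$ in four points—two near each endpoint of $\gamma$, one on each side of a parallel copy of $\beta$—hence $\delta$ initially consists of two arcs, one with both endpoints clustered near each endpoint of $\gamma$.

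For (iii), in case~(2) with $\alpha$ non--boundary-parallel, any bigon between $\partial D_{(\alpha,\beta)}$ and $\partial F$ would exhibit $\alpha$ as boundary-parallel (contradicting the hypothesis), so both arcs of $\delta$ survive the minimization; together they form the frontier in $F$ of a regular neighborhood of $\alpha \cup \gamma$, which is the annulus $A \subset P \subset \partial T$. The complement $S = \overline{F \setminus A} \subset \partial H'$ is non-disk by the assumption that $F$ is neither a disk nor an annulus. In case~(1), with $\alpha$ parallel to a component $C_0 \subset \partial F$ and cobounding an annulus $A_0 \subset F$, one of the two arcs of $\delta$ lies in $A_0$ with both endpoints on $C_0$ and is parallel in $\partial H$ to a sub-arc of $C_0$. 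The resulting bigon---whose interior lies in $A_0 \cup (\partial H \setminus F)$, using a thin strip across $C_0$---supplies an isotopy of $D_{(\alpha,\beta)}$ in $H$ eliminating those two intersection points with $\partial F$, reducing $\delta$ to a single simple arc. After this isotopy, the annulus $A$ cut off in $F$ is an extension of $A_0$ across $\alpha$, sitting on the new $\partial T$, with non-disk complement $S$ on $\partial H'$. The concluding assertion $(\alpha \cup \beta) \cap S = \emptyset$ follows at once from $\alpha \cup \beta \subset \partial T$, $S \subset \partial H'$, and $\partial T \cap \partial H' = D_{(\alpha,\beta)}$.

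The principal obstacle will be justifying the bigon-isotopy of case~(1): one must verify that the bigon exists in $\partial H$ using the boundary-parallelism of $\alpha$ explicitly, and extend the isotopy to $D_{(\alpha,\beta)}$ in $H$ preserving embeddedness and the LM-pair structure. The incompressibility of $F$ together with the assumed minimality of $|F \cap \partial D_{(\alpha,\beta)}|$ are the critical inputs that should rule out competing bigons and spurious intersections during the isotopy.
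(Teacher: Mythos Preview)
Your proposal is correct and is essentially the same approach as the paper's—indeed, the paper's own ``proof'' consists of a single sentence (``It follows directly from the definition of $D_{(\alpha,\beta)}$'') together with a reference to a figure, whereas you have supplied the detailed geometric analysis (the commutator-pattern count of the arcs of $\delta$, the bigon reduction in case~(1), and the essentiality check in case~(2)) that the paper leaves entirely to the picture. One small point worth tightening: in case~(2) your bigon argument addresses only bigons on the $F$-side (showing a frontier arc inessential forces $\alpha$ to be boundary-parallel via $\alpha \simeq C_0$ in homotopy, hence isotopy), but you should also note that exterior bigons, if any, can be absorbed without affecting the count of arcs in $F$, or else that the initial position already realizes the minimum since the two frontier arcs are essential.
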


\begin{proof}
It follows directly from the definition of $D_{(\alpha,\beta)}$, see Fig.~\ref{fig3}.
\end{proof}

\begin{figure}[!htb]
	\centering
	\includegraphics[width=0.85\textwidth]{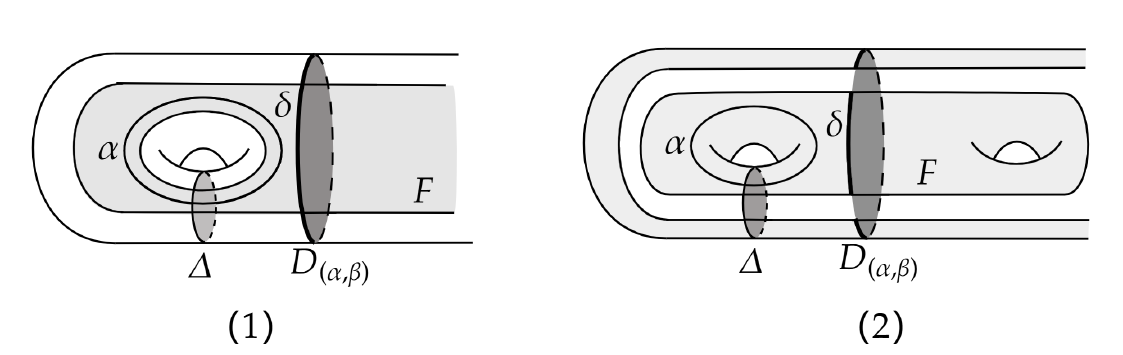}
	\caption{Case (1) and (2) in Proposition \ref{pp2.3}}
	\label{fig3}
\end{figure}

Let $H$ be a handlebody, and $\mathcal{J}$ a collection of pairwise disjoint simple closed curves in $\partial H$. Denote by $H(\mathcal{J})$ the 3-manifold obtained by attaching 2-handles to $H$ along the curves in~$\mathcal{J}$.

In the following, we present some known results which will be used in section~\ref{sec5}.

\begin{lem} [\cite{jaco1980lectures}] \label{lem2.5}
(1) A connected incompressible surface in a solid torus $T$ is either an essential disk or a boundary-parallel annulus in $T$ whose boundary curves are non-meridional.

(2) Let $H$ be a handlebody of genus $n \geq 2$. If $F$ is a connected, incompressible and $\partial$-incompressible surface in $H$, then $F$ is an essential disk in $H$.
\end{lem}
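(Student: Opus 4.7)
The plan for part (1) is to first constrain the topological type of $F$ via fundamental group arguments, and then analyze the remaining cases geometrically.

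Because $T$ is irreducible and $\pi_1(T) \cong \mathbb{Z}$, a connected incompressible $F$ in $T$ cannot be closed: a $2$-sphere would bound a $3$-ball in $T$ (so it would be trivially compressible in the sense of the paper), and a closed surface of positive genus cannot inject into the cyclic group $\pi_1(T)$. So $F$ has non-empty boundary, and incompressibility together with the Loop Theorem yields an injection $\pi_1(F) \hookrightarrow \pi_1(T) \cong \mathbb{Z}$. Since $F$ is compact orientable with non-empty boundary and cyclic fundamental group, $F$ is either a disk or an annulus. If $F$ is a disk, then $\partial F$ must be essential on $\partial T$ (otherwise $F$ is a trivially compressible disk), and the unique essential simple closed curve on $\partial T$ bounding a disk in $T$ is a meridian, so $F$ is an essential (meridian) disk.

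If $F$ is an annulus, I would first show that each component of $\partial F$ is essential on $\partial T$: a trivial component bounds a subdisk of $\partial T$ whose innermost representative gives a compression of $F$, contradicting incompressibility. The two components, being disjoint essential simple closed curves on the torus $\partial T$, necessarily share a common slope. To rule out the meridional slope I would take a meridian disk $D$ of $T$ in minimal position with $F$, and run an innermost-disk/outermost-arc argument on $F \cap D$ to produce a compressing disk of $F$ out of a meridional $\partial F$. Boundary-parallelism then follows by cutting $T$ along a meridian disk disjoint from $F$ (achievable once compressions and $\partial$-parallel arcs in $F \cap D$ are removed by the preceding minimization), which reduces the problem to an incompressible annulus in a $3$-ball; any such annulus is boundary-parallel, and this parallelism carries back through the regluing to $T$.

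For part (2), I would fix a complete system of meridian disks $\mathcal{E} = \{E_1, \ldots, E_n\}$ cutting $H$ into a $3$-ball $B$, and place $F$ in minimal transverse position with $\mathcal{E}$. Closed curves of $F \cap \mathcal{E}$ are removed by the standard innermost-disk argument using incompressibility of $F$ together with irreducibility of $H$. For arcs, an outermost arc $\alpha$ on some $E_j$ cuts off a half-disk $\Delta \subset E_j$ with $\partial \Delta = \alpha \cup \beta$ and $\beta \subset \partial H$; by minimality of $|F \cap \mathcal{E}|$, the arc $\alpha$ is essential in $F$, so $\Delta$ is a $\partial$-compressing disk of $F$, contradicting $\partial$-incompressibility. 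Hence $F \cap \mathcal{E} = \emptyset$, so $F \subset B$; incompressibility in $B$ forces $F$ to be a disk, and $\partial$-incompressibility rules out the trivial case, so $F$ is an essential disk in $H$. The most delicate point is the annulus analysis in (1): since $\partial$-incompressibility is not assumed there, $\partial$-compressions of $F$ may genuinely exist and one must track the arc pattern on $D$ carefully rather than derive an outright contradiction; the rest of the argument is a standard combination of the Loop Theorem and innermost/outermost-disk techniques.
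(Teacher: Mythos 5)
The paper does not prove this lemma; it is quoted from Jaco's lecture notes, so there is no internal argument to compare against. Your part (2) and most of part (1) are the standard innermost-disk/outermost-arc arguments and are essentially correct, with one small slip: a boundary-parallel disk is vacuously $\partial$-incompressible (a disk contains no essential arcs), so $\partial$-incompressibility does not "rule out the trivial case" in (2); what excludes it is the paper's definition of incompressibility, whose clause (1) already declares a disk cutting off a $3$-ball to be (trivially) compressible.

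The genuine gap is the boundary-parallelism step in part (1). You propose to cut $T$ along a meridian disk disjoint from $F$ and reduce to an incompressible annulus in a $3$-ball. This is not achievable: once both components of $\partial F$ are essential and non-meridional in $\partial T$, their slope differs from the meridional one, and two essential simple closed curves of different slopes on the torus $\partial T$ must intersect; hence every meridian disk $E$ meets $F$ in a non-empty family of arcs, and minimization can only eliminate the arcs that are inessential in $F$, never the spanning ones (there are at least $2|q|\geq 2$ of them if the boundary slope is $p/q$). Moreover the statement you want to quote is vacuous: there are no incompressible annuli in a $3$-ball, since the core of any such annulus is null-homotopic and the Loop Theorem then yields a compressing disk. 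The correct route is: after minimization every arc of $E\cap F$ is a spanning arc of the annulus $F$; cutting $T$ along $E$ turns $F$ into a union of $\partial$-parallel disks in the resulting $3$-ball, and one reassembles the product regions across the two copies of $E$ to exhibit $F$ as boundary-parallel in $T$ (equivalently, $\partial$-compress $F$ along an outermost subdisk of $E$ to a $\partial$-parallel disk and reverse the band sum). Ruling out the meridional slope is also easier than your sketch suggests: if $\partial F$ were meridional, the core of $F$ would be null-homotopic in $T$, and the Loop Theorem gives a compressing disk directly.
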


The following lemma was first proved by Przytycki~\cite{przytycki1983incompressibility} in 1983, and then it was generalized to the handle addition theorem by Jaco~\cite{jaco1984adding} in 1984.

\begin{lem} [\cite{przytycki1983incompressibility, jaco1984adding}] \label{lem2.6}
Let $H$ be a handlebody of genus $n\geq 1$, and $J$ a simple closed curve on $\partial H$. Suppose that $\partial H - J$ is incompressible in $H$. Then $H(J)$ has incompressible boundary, or $H(J)$ is a 3-ball. In the latter case, $H$ is a solid torus, and $J$ is a longitude for $H$.
\end{lem}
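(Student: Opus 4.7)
The plan is to argue by contradiction via an outermost-arc analysis on a hypothetical compression disk. Assume $\partial H - J$ is incompressible in $H$, and suppose for contradiction that $\partial H(J)$ admits a compression disk $E$. Write $h \cong D^2 \times I$ for the $2$-handle attached along $J$ and let $D_0 \subset H(J)$ be its core disk; then $D_0$ separates $H(J)$ into a copy of $H$ and the $3$-ball $h$. The goal is to show that the existence of $E$ forces $H$ to be a solid torus and $J$ a longitude, so that $H(J) = B^3$.

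First I would put $E$ transverse to $D_0$ and minimize $|E \cap D_0|$. Circle components of $E \cap D_0$ can be eliminated by innermost-disk surgery inside $D_0 \subset h$, discarding any resulting inessential $2$-sphere by irreducibility. If $E \cap D_0 = \emptyset$ after reduction, then $E$ pushes into $H$ with $\partial E \subset \partial H \setminus N(J)$; since $\partial E$ is essential in $\partial H(J)$ but must be inessential in $\partial H$ by hypothesis, $\partial E$ bounds a disk $\Delta \subset \partial H$ containing $J$. The sphere $E \cup \Delta$ bounds a $3$-ball $B \subset H$ by irreducibility of $H$, and the configuration $B \cup h$ exhibits $H(J)$ as a $3$-ball with $J$ a longitude of the solid torus $B \cup N(J)$. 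Otherwise $E \cap D_0$ contains arcs; taking an outermost one $\alpha$ on $E$, one extracts a subdisk $\delta \subset E$ with $\partial \delta = \alpha \cup \beta$ and $\beta \subset \partial H(J)$, and minimality forces $\delta$ into the $H$ side of $D_0$. After pushing $\alpha$ across $h$ to an arc $\alpha' \subset \partial H$ with endpoints on $J$ and interior disjoint from $J$, the disk $\delta$ becomes a boundary-compressing disk of $\partial H - J$ in $H$ along $\alpha'$.

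Performing this boundary compression, equivalently banding $J$ along $\alpha'$, replaces $J$ by one or two simple closed curves $J'$ in $\partial H$ disjoint from the original $J$. If any essential component of $J'$ bounds a disk in $H$, that disk is an essential disk of $H$ with boundary in $\partial H - J$, contradicting the hypothesis. Otherwise every component of $J'$ is inessential in $\partial H$, which combined with the band structure along $\alpha'$ forces a regular neighborhood of $J \cup \alpha'$ in $\partial H$ to be a once-punctured torus whose meridian bounds an essential disk of $H$ meeting $J$ transversely once; thus $J$ is a longitude of $H$. The hypothesis that every essential disk of $H$ meets $J$ then rules out the existence of any further independent $1$-handle, so $H$ must itself be a solid torus and $H(J) = B^3$. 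The main obstacle is the last deduction: showing that the failure of the band move to produce a disjoint compression disk actually forces $J$ to be a longitude. Making this rigorous requires choosing a complete disk system of $H$ meeting $J$ minimally (each disk must meet $J$ by hypothesis) and a careful combinatorial accounting of how the band along $\alpha'$ interacts with the arcs of $J$ on the cut-open $3$-ball, which is at the heart of Przytycki's original argument.
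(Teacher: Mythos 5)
The paper does not prove this lemma at all: it is quoted verbatim from Przytycki and from Jaco's handle addition theorem, so there is no in-paper argument to compare yours against. Judged on its own, your sketch has the right general flavor (intersect a hypothetical compressing disk with the $2$-handle and do an outermost analysis), but it contains genuine errors in the set-up and leaves the essential step unproved.

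First, the core disk $D_0 = D^2\times\{1/2\}$ does \emph{not} separate $H(J)$ into $H$ and $h$ --- the frontier of $h$ in $H(J)$ is the attaching annulus $\partial D^2\times I$, not $D_0$ --- and, more importantly, $\partial D_0$ is a copy of $J$ lying in the \emph{interior} of $H(J)$. Consequently $E\cap D_0$ can contain no arc with an endpoint on $\partial E$, so there is no subdisk $\delta\subset E$ with $\partial\delta=\alpha\cup\beta$ and $\beta\subset\partial H(J)$: the outermost-arc step as you describe it cannot begin. The standard fix is to isotope $E$ so that $E\cap h$ is a union of parallel core disks; then $P=E\cap H$ is a planar surface whose boundary is $\partial E$ together with parallel copies of $J$, and the analysis is carried out on $P$ inside $H$ (using, e.g., incompressibility and $\partial$-incompressibility of $P$, as in Lemma~\ref{lem2.5}). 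Second, your base case is handled backwards: if $E\cap h=\emptyset$ then $\partial E\subset\partial H - J$ bounds the disk $E$ in $H$, so incompressibility of $\partial H - J$ forces $\partial E$ to bound a disk \emph{in} $\partial H - J$ (not a disk of $\partial H$ containing $J$); that disk persists in $\partial H(J)$ and contradicts $E$ being a compressing disk outright. The $3$-ball alternative does not come from this case; it arises only when $\partial H(J)$ is a sphere, which forces $n=1$ and $J$ essential non-meridional in the torus $\partial H$, whence $H(J)$ is a punctured lens space and is a ball exactly when $J$ is a longitude. Finally, you explicitly concede that the key deduction --- that the band move along $\alpha'$ either produces a compression of $\partial H - J$ or forces $J$ to be a longitude --- is not carried out; that is precisely the content of Przytycki's and Jaco's arguments, so as written the proof is incomplete at its central point. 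Since the lemma is a classical cited result, the cleanest course is to keep the citation; if you want a self-contained proof, it should be rebuilt around the attaching annulus and the planar surface $P$ as above.
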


The following necessary and sufficient condition for a handle addition to a handlebody to be a handlebody was proved in~\cite[Th.~2.6]{lei2020necessary}.

\begin{prop} [\cite{lei2020necessary}] \label{pp2.6}
Let $H$ be a handlebody of genus $n$, $J$ a simple closed curve in $\subset \partial H$. Then $H(J)$ is a handlebody if and only if $J$ is a longitude for $H$.
\end{prop}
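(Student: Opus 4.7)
The plan is to treat the two implications separately: $(\Leftarrow)$ via the LM-pair construction from the paper's setup, and $(\Rightarrow)$ by induction on the genus $n$.

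For the ``if'' direction, suppose $J$ is a longitude of $H$ with meridian $m = \partial D$ forming an LM-pair. The associated disk $D_{(J,m)}$ cuts $H$ into a solid torus $T$ (with $J$ a longitude of $T$ meeting the meridian $m$ in one point) and a handlebody $H'$ of genus $n-1$. Attaching a 2-handle along the longitude $J$ of the solid torus $T$ produces a 3-ball $B$, so $H(J) = B \cup_{D_{(J,m)}} H'$ is a 3-ball glued to $H'$ along a single disk, hence homeomorphic to the handlebody $H'$ of genus $n-1$.

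For the ``only if'' direction, assume $H(J)$ is a handlebody. The Euler characteristic computation $\chi(\partial H(J)) = \chi(\partial H) + 2 = 4 - 2n$ shows $\partial H(J)$ is a single closed surface of genus $n-1$, so $J$ is non-separating on $\partial H$; moreover $J$ cannot bound a disk in $H$, else $\pi_1(H(J)) = \pi_1(H)/\langle\langle J\rangle\rangle = F_n \neq F_{n-1}$. The base case $n=1$ follows by writing $J$ as a $(p,q)$-curve on $\partial H = T^2$: the handle addition $H(J)$ is a handlebody precisely when $|q| = 1$, which is exactly when $J$ meets the meridian in one point and is a longitude.

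For the inductive step with $n\geq 2$, Lemma~\ref{lem2.6} forces $\partial H \setminus J$ to be compressible in $H$, since otherwise $H(J)$ would be a 3-ball, contradicting $n \geq 2$. Let $D' \subset H$ be a compressing disk with $\partial D'$ essential in $\partial H$ and disjoint from $J$. I claim $\partial D'$ remains essential in $\partial H(J)$: a disk in $\partial H(J)$ bounded by $\partial D'$ either lies entirely in $\partial H$ (impossible since $\partial D'$ is essential in $\partial H$) or uses one or both capping disks of the 2-handle, which would force $\partial D'$ to be parallel in $\partial H$ to $J$; but $\partial D'$ bounds $D'$ in $H$ while $J$ does not. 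Cutting the handlebody $H(J)$ along the essential disk $D'$ then yields a handlebody of lower genus: either $H^*(J)$ of genus $n-2$ (if $D'$ is non-separating in $H$) or two pieces $H_1(J), H_2$ with $J \subset \partial H_1$ and $g(H_1) < n$ (if $D'$ is separating, noting $g(H_2) \geq 1$ since $\partial D'$ is essential). In either case, the inductive hypothesis provides a disk $D^*$ in the smaller handlebody with $|J \cap \partial D^*| = 1$; an innermost-arc isotopy of $\partial D^*$ off the copies of $D'$ on the cut boundary (valid since $J$ is disjoint from these copies) produces the required longitude-disk for $J$ in $H$.

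The main obstacle is verifying the essentiality claim for $D'$ inside $H(J)$, which requires a careful subsurface analysis of $\partial H(J) = (\partial H \setminus A(J)) \cup (\text{capping disks})$; the key input is that $J$ does not bound a disk in $H$, a fact deduced at the outset from the genus count. A secondary technical step is the final isotopy of $\partial D^*$ off the copies of $D'$ on the cut boundary, a routine innermost-arc surgery that preserves $|J \cap \partial D^*|$ because $J$ is disjoint from those copies.
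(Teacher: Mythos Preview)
The paper does not supply its own proof of this proposition; it is quoted from \cite{lei2020necessary}. Your argument is therefore being compared against the original source, not against anything in the present paper, and on the whole it is sound: the $(\Leftarrow)$ direction via the LM-pair decomposition is exactly the intended picture, and the $(\Rightarrow)$ direction by induction on $g(H)$, using Lemma~\ref{lem2.6} to produce a compressing disk $D'$ disjoint from $J$, is a clean and standard route.

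There is one genuine (though easily repaired) gap. In your essentiality claim for $\partial D'$ in $\partial H(J)$, you assert that if a disk $\Delta\subset\partial H(J)$ bounded by $\partial D'$ uses ``one or both'' capping disks then $\partial D'$ must be parallel to $J$ in $\partial H$. This is correct when $\Delta$ contains exactly one capping disk, but fails as stated when $\Delta$ contains both: in that case $\Delta\setminus(D_1\cup D_2)$ is a pair of pants, and gluing the annulus $A(J)$ back in shows only that $\partial D'$ bounds a once-punctured torus in $\partial H$, not that it is parallel to $J$. The fix is immediate: if $\Delta$ contains both capping disks, then the \emph{complementary} disk $\overline{\partial H(J)\setminus\Delta}$ contains neither, hence lies in $\partial H$, contradicting essentiality of $\partial D'$ in $\partial H$. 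You should state this case explicitly.

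A second, more minor point: your invocation of Lemma~\ref{lem2.6} is phrased as ``otherwise $H(J)$ would be a 3-ball'', but the lemma's conclusion is the disjunction ``$\partial H(J)$ incompressible or $H(J)$ a 3-ball''; you are really using that a genus $n-1\geq 1$ handlebody satisfies neither. The final isotopy of $\partial D^*$ off the scars $D'_\pm$ is unproblematic, since these are disks in $\partial H^*$ disjoint from $J$, so pushing $\partial D^*$ across them is a genuine ambient isotopy preserving $|\partial D^*\cap J|$.
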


\begin{prop} [\cite{lei1994some}] \label{pp2.7}
Let $M$ be a 3-manifold, and $T$ a solid torus, $M'=M\cup_A T$ an amalgamation of $M$ and $T$ along annuli $A$. If the core curve of $A$ is a longitude of $T$, then $M' \cong M$.
\end{prop}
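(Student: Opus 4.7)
My plan is to show that the longitude hypothesis on the core of $A$ forces the pair $(T,A)$ to admit a product decomposition $(A \times [0,1], A \times \{0\})$, and then conclude that the amalgamation $M \cup_A T$ is nothing but $M$ with an external collar attached along $A \subset \partial M$, which does not change the ambient homeomorphism type.

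I would first identify $T$ with $S^1 \times D^2$ in such a way that the longitude serving as the core of $A$ is $S^1 \times \{p\}$ for some $p \in \partial D^2$. Since $A$ is a regular neighborhood of its core in $\partial T$, after an isotopy in $\partial T$ I may arrange $A = S^1 \times J$, where $J \subset \partial D^2$ is an arc containing $p$. The pair $(D^2, J)$ is then homeomorphic to the pair $(J \times [0,1], J \times \{0\})$, since a closed square with one side distinguished is just a disk with a subarc of its boundary distinguished. Taking products with $S^1$ yields the pair homeomorphism
\[
(T, A) \;\cong\; (S^1 \times J \times [0,1],\, S^1 \times J \times \{0\}) \;=\; (A \times [0,1], A \times \{0\}).
\]

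From here the proof is essentially concluded: $M' = M \cup_A T \cong M \cup_{A \times \{0\}} (A \times [0,1])$, which is $M$ with an external collar glued along $A \subset \partial M$, and by the collar neighborhood theorem this is homeomorphic to $M$. The main step — really more of a conceptual check than an obstacle — is establishing the pair homeomorphism above and recognizing that it uses crucially the longitude hypothesis on the core of $A$. Indeed, had the core been isotopic to, say, a meridian of $T$, then the complement $T \setminus A$ would contain an essential meridian disk of $T$ and $T$ could not retract onto $A$; the longitude condition is exactly what guarantees the product decomposition, and hence the conclusion $M' \cong M$.
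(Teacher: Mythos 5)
Your proof is correct. The paper does not actually prove Proposition~\ref{pp2.7} --- it is quoted from the reference \cite{lei1994some} --- so there is no in-paper argument to compare against, but your route is the standard one and it works: a longitude of $T$ can be carried to $S^1 \times \{p\}$ by a self-homeomorphism of $T$ (compose with meridional Dehn twists if the longitude has nonzero meridional winding), an embedded essential annulus in the torus $\partial T$ is a regular neighborhood of its core, and the pair homeomorphism $(T,A) \cong (A \times [0,1], A \times \{0\})$ then exhibits $M'$ as $M$ with an external half-collar attached along $A \subset \partial M$, which is homeomorphic to $M$ by the usual collar-absorption argument (using a collar of $\partial M$ in $M$ to push the added product back inside; the corner along $\partial A$ is handled as usual in the PL or topological category). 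The only steps worth stating explicitly in a written-up version are the normalization of the longitude and the collar-absorption lemma, both of which are standard.
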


The following uniqueness theorem of Heegaard splittings of $S^3$ is due to Waldhausen~\cite{waldhausen1968heegaard}.

\begin{thm} [\cite{waldhausen1968heegaard}]  \label{thm2.7}
Any positive genus Heegaard splitting of $S^3$ is stabilized.
\end{thm}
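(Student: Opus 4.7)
The plan is to deduce the statement from Waldhausen's uniqueness result for Heegaard splittings of $S^3$, namely that any two Heegaard splittings of $S^3$ of the same genus are ambient isotopic. Granted this uniqueness, it suffices to exhibit, for each $g\geq 1$, one stabilized genus-$g$ Heegaard splitting of $S^3$: any other splitting of that genus will be isotopic to it, and the stabilization property is preserved by ambient isotopy (the pair of disks $(D_1,D_2)$ with $|\partial D_1\cap\partial D_2|=1$ is transported along).

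For $g=1$, I would use the standard torus splitting $S^3=V_1\cup_T V_2$ of $S^3$ into two solid tori glued along $T$, where the meridian of $V_1$ is identified with the longitude of $V_2$. If $D_i\subset V_i$ is a meridian disk, then $\partial D_1,\partial D_2\subset T$ meet transversely in a single point, which is the definition of stabilized given in the preliminaries. For $g>1$, the standard genus-$g$ splitting is obtained from the genus-$(g-1)$ one by stabilization, so it contains such a pair of disks by construction and is therefore stabilized.

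The main obstacle is Waldhausen's uniqueness theorem itself. The strategy I would follow for it is: fix complete meridian systems $\mathcal{D}_1=\{D_1',\ldots,D_g'\}$ for $H_1$ and $\mathcal{D}_2=\{D_1'',\ldots,D_g''\}$ for $H_2$, and isotope them to minimize the total intersection number $|\partial\mathcal{D}_1\cap\partial\mathcal{D}_2|$ on $F$. Using innermost-disk and outermost-arc arguments in both $H_1$ and $H_2$, together with Alexander's theorem that every 2-sphere in $S^3$ bounds a ball, one shows that the curves $\partial D_i'$ and $\partial D_j''$ assemble into a standard configuration on $F$. An induction on the genus then produces an ambient isotopy of $S^3$ carrying $F$ to the standard Heegaard surface. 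The delicate point is controlling the combinatorics of the intersection pattern on $F$: one must use simple-connectivity of $S^3$ to force loops that arise during the normalization to bound compression disks in one of the handlebodies, which is where the hypothesis $M=S^3$ (as opposed to an arbitrary 3-manifold) becomes essential.
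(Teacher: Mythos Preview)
The paper does not prove this theorem at all: it is quoted from \cite{waldhausen1968heegaard} as a black box and used later (in Proposition~\ref{pp3.1}) without any argument. So there is no ``paper's own proof'' to compare against; you are supplying content the authors deliberately outsourced.

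On the substance of your proposal: the reduction you give is correct but essentially tautological. Waldhausen's theorem is usually stated in one of two equivalent forms --- ``every positive-genus Heegaard splitting of $S^3$ is stabilized'' or ``any two Heegaard splittings of $S^3$ of the same genus are isotopic'' --- and each follows from the other in a line or two (your direction is the easy one; conversely, repeated destabilization plus the trivial genus-$0$ case gives uniqueness from stabilization). So invoking the uniqueness form to prove the stabilization form is not a genuine reduction in difficulty; you have merely renamed the hard step.

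Your sketch of the hard step is where the real gap lies. The phrase ``the curves $\partial D_i'$ and $\partial D_j''$ assemble into a standard configuration on $F$'' hides the entire content of Waldhausen's argument. Innermost-disk and outermost-arc moves alone do not force a meridian system on one side to become dual to a meridian system on the other; one needs an inductive scheme (in Waldhausen's original paper, a careful analysis producing a reducing sphere, or in later treatments the Reidemeister--Singer stable-equivalence together with a cancellation argument) that genuinely uses $\pi_1(S^3)=1$ at a specific point, not just as a vague invocation. As written, your outline would apply verbatim to any closed $3$-manifold, which is a sign that the crucial step is missing. If you want to actually prove the theorem rather than cite it, you should either follow Waldhausen's reducing-sphere argument explicitly or give a modern proof via thin position/sweepouts; either route is several pages, not a paragraph.
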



\begin{lem} \label{lem2.9}
Let $M$ be a compact orientable 3-manifold with boundary. Then $M$ contains a properly embedded, 2-sided, incompressible surface $F$ with $\partial F\ne \emptyset$.
\end{lem}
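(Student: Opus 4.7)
The proof splits into cases according to whether $\partial M$ is compressible in $M$.

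First, suppose $\partial M$ is compressible in $M$. Then by definition there is a properly embedded disk $D\subset M$ with $\partial D$ an essential simple closed curve in $\partial M$. Such a $D$ is automatically $2$-sided and has $\partial D\neq\emptyset$, so it remains to check that $D$ is incompressible. Since $D$ is a disk, condition~(2) of the definition of compressibility is vacuous. For condition~(1), $D$ is not a $2$-sphere and is not a disk lying in $\partial M$; moreover, if $D$ cut a $3$-ball off from $M$, then $D$ would be $\partial$-parallel, so $\partial D$ would bound a disk in $\partial M$, contradicting essentiality. Hence $D$ is the desired surface.

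Next, suppose $\partial M$ is incompressible. The idea is to produce a properly embedded surface bounded by a single non-separating curve on $\partial M$, and then compress it to incompressibility. By the half-lives-half-dies principle (Poincar\'e--Lefschetz duality), we have
\[
\operatorname{rank}\ker\bigl(H_1(\partial M;\mathbb{Q})\to H_1(M;\mathbb{Q})\bigr) \;=\; \tfrac{1}{2}\dim H_1(\partial M;\mathbb{Q}).
\]
Provided $\partial M$ has a component of positive genus, this kernel is non-trivial, so I can pick a non-separating simple closed curve $\gamma$ on $\partial M$ that is null-homologous in $M$. Then $\gamma$ bounds a properly embedded orientable surface $S \subset M$ (via the standard construction: $\gamma$ null-homologous yields a dual class, whose Poincar\'e dual in $H^1(M;\mathbb{Z})$ is represented by a map $M\to S^1$ whose regular value preimage is such an $S$). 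I would then compress $S$ maximally; compression disks meet $S$ in interior curves, so $\partial S = \gamma$ is preserved throughout. The component of the resulting surface $F$ containing $\gamma$ is incompressible, $2$-sided, properly embedded, and has non-empty boundary.

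The main obstacle is the degenerate case where $\partial M$ is a union of $2$-spheres, where both tools above break down: $\partial M$ is vacuously incompressible, no essential curve on $\partial M$ exists, and $M=B^{3}$ is an outright counterexample (every properly embedded disk in $B^3$ is $\partial$-parallel, hence compressible by condition~(1)). In practice the lemma is applied with an implicit assumption that $M$ is not a ball and, more generally, that $\partial M$ has a non-spherical component; under this natural restriction the two cases above cover all possibilities. A cleaner alternative, which avoids singling out the sphere case, is to observe directly that $H_2(M,\partial M;\mathbb{Q})\cong H^1(M;\mathbb{Q})$ is non-zero whenever $\partial M$ contains a non-sphere component, represent a non-trivial relative $2$-cycle by an embedded surface $S$ with $\partial S\subset\partial M$ non-empty, and compress as above.
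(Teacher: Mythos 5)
The paper does not actually prove this lemma: it only cites \cite[Theorem~III.10]{jaco1980lectures} and \cite[Lemma~6.8]{hempel20043}, and the argument in those sources is precisely your second case --- half-lives-half-dies, a properly embedded surface dual to a class that is non-trivial on the boundary, then maximal compression. So your main line coincides with the intended proof; your first case (an essential compressing disk of $\partial M$ already serves as $F$) is a harmless shortcut, and your diagnosis of the degenerate case is correct and worth stating explicitly: for $M=B^3$ (and more generally when every component of $\partial M$ is a sphere) the lemma is false as written, because every properly embedded surface with boundary there is either compressible or a $\partial$-parallel disk, which the paper's definition also declares compressible. The cited theorems carry the hypothesis that some component of $\partial M$ has positive genus (equivalently, that $\ker\bigl(H_1(\partial M;\mathbb{Q})\to H_1(M;\mathbb{Q})\bigr)\neq 0$); Lemma~\ref{lem2.9} silently drops it, so the defect is in the statement, not in your argument.

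One step of your case~2 should be tightened. A non-trivial element of the \emph{rational} kernel only produces a primitive integral class of $H_1(\partial M;\mathbb{Z})$ whose image in $H_1(M;\mathbb{Z})$ is torsion, so the simple closed curve $\gamma$ you choose need not be integrally null-homologous in $M$ and need not bound an embedded surface. Either replace $\gamma$ by $n$ parallel copies (where $n$ kills the torsion), which do cobound a Seifert-type surface, or run the dual version you sketch at the end, as Hempel does: pick $\varphi\in H^1(M;\mathbb{Z})$ whose restriction to $H^1(\partial M;\mathbb{Z})$ is non-zero (possible by half-lives-half-dies), realize it by a map $M\to S^1$, and take the preimage of a regular value. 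That formulation has the added benefit that the boundary of the resulting surface is homologically essential in $\partial M$, which is the cleanest way to see that the component carrying the boundary cannot degenerate to a trivial disk after maximal compression.
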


The proof of Lemma \ref{lem2.9} can be found in \cite[Theorem~III.10]{jaco1980lectures} and \cite[Lemma~6.8]{hempel20043}.

The notion of a primitive set of disjoint simple closed curves in the boundary of a handlebody was introduced by Gordon in~\cite{gordon1987primitive}.

\begin{dfn} \label{d2.8}
Let $\mathcal{J}=\{J_1,\ldots,J_m\}$ be a collection of simple closed curves in the boundary of a handlebody $H$ of genus $n$.
\begin{itemize}
\item[(1)] If $\{[J_1],\ldots,[J_m]\}\subset \pi_1(H)$ (after some conjugation) can be extended to a basis of $\pi_1(H)$, we say that $\mathcal{J}$ is \textit{primitive} in $H$.
\item[(2)] If the curves in $\mathcal{J}$ are pairwise disjoint, and $H(\mathcal{J})$ is a handlebody (of genus $n-m$), we say that $\mathcal{J}$ is \textit{quasi-primitive} in $H$.
\end{itemize}

Let $\mathcal{H}$ be a collection of finite number of handlebodies, and $\mathcal{J}$ a collection of simple closed curves in $\partial\mathcal{H}=\bigcup_{H_i\in\mathcal{H}}\partial H_i$. We say that $\mathcal{J}$ is \textit{primitive} in $\mathcal{H}$ if denote by $\mathcal{J}_i$ the subset of the curves in $\mathcal{J}$ which are lying in $\partial H_i$, $\mathcal{J}_i$ is primitive in $H_i$ for each non-empty $\mathcal{J}_i$.
\end{dfn}

In definition \ref{d2.8} it is clear, if the curves in $\mathcal{J}$ are pairwise disjoint and  $\mathcal{J}$ is primitive, then $\mathcal{J}$ is quasi-primitive. The converse is generally not true except for $m=1$. But it was shown by Gordon in~\cite[Th.~1]{gordon1987primitive} that if all subsets $\mathcal{J}'$ of $\mathcal{J}$ are quasi-primitive in $H$, then $\mathcal{J}$ is primitive in $H$.

The following theorem~\cite[Th.~3.2]{lei2020necessary} gives a characteristics of an $H'$-splitting $X\cup_F Y$ for a handlebody $H$, where $F$ is incompressible in $H$.

\begin{thm} [\cite{lei2020necessary}] \label{t2.9}
Let $X\cup_F Y$ be an $H'$-splitting for a 3-manifold $M$, where \linebreak $g(X), g(Y) \geq 2$. Suppose that $F$ is incompressible in both $X$ and $Y$. Then $M$ is a handlebody if and only if there exists a basis curve set $\mathcal{J}=\{J_1,\ldots,J_m\}$ of $\pi_1(F)$ with a partition $(\mathcal{J}_1,~\mathcal{J}_2)$ of $\mathcal{J}$ such that $\mathcal{J}_1$ is primitive in $X$ and $\mathcal{J}_2$ is primitive in $Y$.
\end{thm}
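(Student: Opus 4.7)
The plan is to prove both directions by induction on $p=\operatorname{rank}(\pi_1(F))$, reducing at each step to a strictly smaller configuration via absorption of a solid torus using Proposition~\ref{pp2.7}. The base case $p=1$ forces $F$ to be an annulus (by orientability together with $\pi_1(F)\cong\mathbb{Z}$), and this case is exactly Theorem~\ref{tt3.9} from Section~\ref{sec5}.

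For the ``if'' direction, I fix the pair $(J_1,D_1)$ and let $A_1$ be a thin annular neighborhood of $J_1$ in $F$ chosen disjoint from $J_2,\ldots,J_p$. By general position together with $|J_i\cap\partial D_j|=\delta_{ij}$, I isotope $D_1$ so that $D_1\cap A_1$ is a single spanning arc of $A_1$ while $D_1$ remains disjoint from $J_j$ for $j\geq 2$. The regular neighborhood $T_1:=N_{\mathcal H}(A_1\cup D_1)$ in the component of $\mathcal H$ containing $D_1$ is a solid torus having $D_1$ as a meridian disc and $J_1$ as a longitude on $\partial T_1$. Proposition~\ref{pp2.7} then lets me absorb $T_1$ and obtain $M^{\ast}\cong M$, presented as an amalgamation (respectively, self-amalgamation) of the modified collection $\mathcal H^{\ast}$ along the surface $F^{\ast}:=F\setminus A_1$, for which $\operatorname{rank}(\pi_1(F^{\ast}))=p-1$. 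The residual data $\{J_2,\ldots,J_p\}$ and $\{D_2,\ldots,D_p\}$ verifies the hypothesis on $(M^{\ast},F^{\ast})$, so by induction $M^{\ast}$, and hence $M$, is a handlebody.

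For the ``only if'' direction, assume $M$ is a handlebody. Since $F$ is incompressible in $M$ and is not a disc, Lemma~\ref{lem2.5}(2) implies that $F$ is $\partial$-compressible in $M$; pick a $\partial$-compressing disc $\Delta$ for $F$ in $M$, lying in some $H\in\mathcal H$. Applying Proposition~\ref{pp2.3}, I obtain an LM-pair $(\alpha,\beta)$ on $\partial H$ with $\alpha\subset F$ a simple closed curve and $\beta=\partial\Delta$, together with the associated disc $D_{(\alpha,\beta)}$ splitting $H$ into a solid torus $T$ (with $\alpha\cup\beta\subset\partial T$) and a smaller handlebody $H'$, and decomposing $F$ as $A\cup S$, with $A\subset\partial T$ an annulus containing $\alpha$ and $S\subset\partial H'$ a non-disc surface. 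Set $J_1:=\alpha$ and $D_1:=\Delta$; since $\alpha$ may be taken in the interior of $F$ and $\Delta\cap F=\gamma$ satisfies $|\alpha\cap\gamma|=1$, we have $|J_1\cap\partial D_1|=1$. Absorbing $T$ via Proposition~\ref{pp2.7} then yields $M\cong M^{\ast}$, a handlebody presented as an amalgamation (respectively, self-amalgamation) of $\mathcal H^{\ast}:=(\mathcal H\setminus\{H\})\cup\{H'\}$ along $S$, with $\operatorname{rank}(\pi_1(S))=p-1$. The induction hypothesis on $(M^{\ast},S)$ supplies $\{J_2,\ldots,J_p\}\subset S\subset F$ and $\{D_2,\ldots,D_p\}\subset\mathcal H^{\ast}$ dual to them; these are automatically disjoint from $J_1$ since $S\cap A=\emptyset$, and disjoint from $D_1=\Delta\subset T$ since $\mathcal H^{\ast}$ lies outside the interior of $T$.

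The main obstacle is the self-amalgamation case (2), in which both cutting sections $F'$ and $F''$ of the non-separating surface $F$ lie in $\partial H$, so a $\partial$-compressing disc may interact with both copies simultaneously. Here one must verify that the reduction $(M,F)\leadsto(M^{\ast},S)$ stays within the class of (self-)amalgamations of handlebodies along incompressible non-disc surfaces, so that the induction hypothesis remains applicable. This is where the structural results from Sections~\ref{sec3}--\ref{sec4} (Theorem~\ref{tt3.3}, Theorem~\ref{tt3.3}$'$, and Corollary~\ref{cc3.7}) are essential, as they classify how compressions and $\partial$-compressions in the ambient handlebody pull back through the self-gluing and ensure in particular that $S$ remains incompressible in $M^{\ast}$.
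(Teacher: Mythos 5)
The first thing to note is that the paper does not prove Theorem~\ref{t2.9} at all --- it is imported verbatim from \cite{lei2020necessary} as a known input --- so there is no in-paper argument to measure yours against; and what you have actually written is an attempted proof of Theorem~\ref{thm1.1} (the JD-pair statement), not of Theorem~\ref{t2.9}. The two are not interchangeable. Theorem~\ref{t2.9} hypothesizes a \emph{basis curve set} of $\pi_1(F)$ admitting a partition $(\mathcal J_1,\mathcal J_2)$ with $\mathcal J_1$ primitive in $X$ and $\mathcal J_2$ primitive in $Y$ --- an algebraic condition --- whereas your ``if'' direction starts from a system of pairwise disjoint embedded discs with $|J_i\cap\partial D_j|=\delta_{ij}$. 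The paper records only the implication from such discs to primitivity (Lemma~\ref{lem14}); the converse, extracting a disjoint geometrically dual disc system from primitivity, is itself a nontrivial theorem (essentially Gordon \cite{gordon1987primitive}) which your argument uses silently before it can even begin. Symmetrically, your ``only if'' direction never checks that the curves you produce form a basis curve set of $\pi_1(F)$, nor that they split into a part primitive in $X$ and a part primitive in $Y$. The ``self-amalgamation case (2)'' you single out as the main obstacle does not occur in Theorem~\ref{t2.9}, which concerns a splitting into two distinct handlebodies.

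Even read as a proof of the JD-pair statement, the induction is arithmetically broken, because your complexity-reducing move is a cut along a \emph{closed curve} rather than a $\partial$-compression along an \emph{arc}. For $F=S_{g,b}$ one has $\operatorname{rank}(\pi_1(F))=C(F)=2g+b-1=p$; deleting the annulus $A_1$ (i.e.\ cutting $F$ along $J_1$) yields $S_{g-1,b+2}$, of the \emph{same} rank $p$ when $J_1$ is non-separating, and two pieces of total rank $p+1$ when $J_1$ separates --- never $p-1$. Hence the inductive hypothesis does not apply to $(M^{\ast},F^{\ast})$, and the residual data $\{J_2,\dots,J_p\}$ is one curve short of $\operatorname{rank}(\pi_1(F^{\ast}))$. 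A similar miscount occurs in your ``only if'' step: in case (2) of Proposition~\ref{pp2.3} the surface $S$ has $C(S)=p-2$ when connected, so induction returns too few curves and your final system has only $p-1$ elements. In addition, the ``absorption'' of the solid torus is not an instance of Proposition~\ref{pp2.7} as stated, since $T$ meets the rest of $M$ along the disc $D_{(J_1,m_1)}$ together with an annulus, not along an annulus alone. The move that decreases $C(F)$ by exactly one --- and around which the paper's own induction for Theorem~\ref{thm1.1} is organized --- is a $\partial$-compression along an essential arc $\gamma$, with the dual closed curve $\alpha$, $|\alpha\cap\gamma|=1$, and the compressing disc appended to the inductively obtained pair; your annulus-absorption step would have to be replaced by that, and the primitive-to-dual-discs bridge supplied separately, before this could serve as a proof of Theorem~\ref{t2.9}.
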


For convenience, we also say that a basic curve set $\mathcal{J}$ in Theorem~\ref{t2.9} is \textit{primitive} in $\{X,Y\}$. The following is a property of a collection of simple closed curves on the boundary of a handlebody, see~\cite[Th.~2.7]{lei2020necessary}.

\begin{lem} [\cite{lei2020necessary}] \label{lem14}
Let $H$ be a handlebody of genus $n$, and ${\mathcal J}=\{J_1,\cdots,J_p\}$ ($p<n$) a collection of simple closed curves on $\partial H$. Suppose that there exists a collection ${\mathcal D}=\{D_1,\ldots,D_p\}$ of pairwise disjoint disks properly embedded in $H$ such that $|J_i\cap \partial D_i|=1$ for $1\leq i\leq p$, and $|\partial D_i\cap J_j|=0$ for $1\leq i< j\leq p$. Then ${\mathcal J}$ is  primitive.
\end{lem}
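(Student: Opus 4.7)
My plan is to reduce to Gordon's theorem cited above: if every subset $\mathcal{J}' \subseteq \mathcal{J}$ is quasi-primitive in $H$, then $\mathcal{J}$ is primitive. Since the triangular intersection hypothesis restricts to the analogous hypothesis on any principal subcollection $(\mathcal{J}', \mathcal{D}')$, it suffices to establish the following key claim: $H(\mathcal{J})$ is a handlebody of genus $n - p$.

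I prove the key claim by induction on $p$. For the base case $p = 1$, the transverse intersection $|J_1 \cap \partial D_1| = 1$ forces the algebraic intersection $[J_1] \cdot [\partial D_1]$ in $H_1(\partial H;\mathbb{Z})$ to be $\pm 1$, so $[\partial D_1] \neq 0$. Hence $\partial D_1$ does not bound a disk on $\partial H$, so $D_1$ is not trivially compressible; in the paper's terminology, $D_1$ is essential and $(J_1, \partial D_1)$ is an LM-pair, making $J_1$ a longitude of $H$. Proposition~\ref{pp2.6} then gives that $H(J_1)$ is a handlebody of genus $n - 1$.

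For the inductive step, the same argument shows $H_1 := H(J_1)$ is a handlebody of genus $n - 1$. To apply the inductive hypothesis to $(\{J_2, \ldots, J_p\}, \{D_2, \ldots, D_p\})$ inside $H_1$, I first modify the disks so that their boundaries miss $J_1$ and thus lie in $\partial H_1$. For each intersection point of $\partial D_i$ with $J_1$ $(i \geq 2)$, I perform a disk band-sum of $D_i$ with a parallel copy of $D_1$ along an arc of $J_1$ joining the point to $\partial D_1 \cap J_1$. One such band-sum reduces $|\partial D_i \cap J_1|$ by one while preserving $|\partial D_i \cap J_j|$ for all $j \geq 2$, because $|\partial D_1 \cap J_j| = 0$ for $j \geq 2$ (triangular hypothesis) and the band arcs lie on $J_1$, which is disjoint from every $J_j$. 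Using pairwise disjoint, nested parallel copies of $D_1$ for the various bandings, and keeping them disjoint from $D_2, \ldots, D_p$, preserves pairwise disjointness of the modified disks $D_2', \ldots, D_p'$. The lemma's hypothesis now holds inside $H_1$, so by induction $H_1(\{J_2, \ldots, J_p\}) = H(\mathcal{J})$ is a handlebody of genus $(n - 1) - (p - 1) = n - p$.

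Any subset $\mathcal{J}' \subseteq \mathcal{J}$ inherits the triangular pattern with the corresponding subcollection of $\mathcal{D}$, so the key claim applied to $\mathcal{J}'$ shows $H(\mathcal{J}')$ is a handlebody, i.e., $\mathcal{J}'$ is quasi-primitive in $H$. Gordon's theorem then yields primitivity of $\mathcal{J}$. The main obstacle I expect is the disk modification in the inductive step: the successive band-sums must simultaneously preserve pairwise disjointness of the disks and the triangular intersection pattern with the remaining $J_j$'s. The nested-parallel-copies construction manages disjointness, while the triangular hypothesis together with disjointness of the $J_i$'s controls the intersection counts; the bookkeeping is delicate but not deep.
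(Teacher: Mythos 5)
The paper itself gives no proof of this lemma (it is imported from \cite{lei2020necessary}, where the argument is essentially algebraic: one extends $\mathcal{D}$ to a complete disk system, reads each $[J_i]$ off as a word in the dual free generators, and uses the triangular intersection pattern to see that $x_i\mapsto [J_i]$ extends to a surjective, hence bijective, endomorphism of the free group). Your route through quasi-primitivity of all subsets plus Gordon's theorem is a genuinely different, more geometric approach, and its skeleton is sound; but there are two concrete problems. First, quasi-primitivity and Gordon's theorem are only formulated for \emph{pairwise disjoint} curves, and your argument uses disjointness of the $J_i$ both explicitly (``the band arcs lie on $J_1$, which is disjoint from every $J_j$'') and implicitly (the curves $J_2,\ldots,J_p$ must survive as curves on $\partial H(J_1)$). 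The lemma as stated does not assume the $J_i$ are pairwise disjoint, and the algebraic proof needs no such assumption, since only intersections with the disks enter. So, as written, you prove a special case; it happens to be the case used elsewhere in this paper, but you should either add the hypothesis or switch to an argument that does not need it.

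Second, the disk modification in your inductive step is where the real content lies, and ``nested parallel copies of $D_1$'' does not address the actual obstruction: a band running along $J_1$ from a point $y\in\partial D_i\cap J_1$ to $x_0=J_1\cap\partial D_1$ must pass every other point of $\bigl(\bigcup_{i'}\partial D_{i'}\bigr)\cap J_1$ on that arc, and near each such point the disk $D_{i'}$ is a properly embedded wall that a pushed-off band cannot avoid unless all the disks are truncated simultaneously and the depths of the bands are chosen coherently. This can be carried out, but it is exactly the delicate part you defer. A cleaner fix is to run the induction from the other end of the triangular pattern: attach the $2$-handle along $J_p$ first. By hypothesis $J_p$ meets $\partial D_p$ once and is disjoint from $\partial D_1,\ldots,\partial D_{p-1}$, so $H(J_p)$ is a handlebody of genus $n-1$ by Proposition~\ref{pp2.6}, the disks $D_1,\ldots,D_{p-1}$ survive unchanged as properly embedded disks in $H(J_p)$, and (granting disjointness of the $J_i$) the curves $J_1,\ldots,J_{p-1}$ survive with the same intersection pattern. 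No band sums are needed, the induction closes at once, the same argument applies verbatim to every subset of $\mathcal{J}$, and Gordon's theorem then finishes as you intend.
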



\begin{prop} \label{pp3.1}
Suppose that $M$ is irreducible. If $F$ is reducible, then $F$ is stabilized.
\end{prop}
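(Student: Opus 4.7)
My plan is to exploit the irreducibility of $M$ to show that the reducing sphere bounds a ball that carries a stabilization. First, since $F$ is reducible, fix compression disks $D_1$ on one side and $D_2$ on the other side of $F$ with $\partial D_1 = \partial D_2 = \alpha$, an essential simple closed curve on $F$. The union $S = D_1 \cup D_2$ is a $2$-sphere in $M$, so by irreducibility it bounds a $3$-ball $B$.

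Next, I would simplify using standard innermost-disk moves: each inessential intersection curve of $F \cap (D_1 \cup D_2)$ other than $\alpha$ cobounds with a disk in $F$ a $2$-sphere, which bounds a ball in $M$ by irreducibility, and I isotope $F$ across this ball to remove the curve. After this process $F \cap (D_1 \cup D_2) = \alpha$; if some essential intersection curve persists, it already furnishes an extra compression disk that I would analyze separately. Let $F_0 = F \cap B$. Since $\alpha$ is essential on $F$, $F_0$ is not a disk, and hence is a connected surface of positive genus $g_0 \geq 1$ with $\partial F_0 = \alpha$. Since $F$ is two-sided, $F_0$ separates $B$ into two regions $B^+, B^-$ with $D_1 \subset \partial B^+$ and $D_2 \subset \partial B^-$.

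The crux is to produce compressing disks $E^+ \subset B^+$ and $E^- \subset B^-$ for $F_0$ with $|\partial E^+ \cap \partial E^-| = 1$. I would argue that $B^+$ and $B^-$ are handlebodies of genus $g_0$: since $B$ is simply connected, every essential simple closed curve on $\partial B^\pm = D_i \cup_\alpha F_0$ is null-homotopic in $B$ and, by Dehn's lemma, bounds an embedded disk in $B$ that lies on one side of $F_0$; iteratively compressing along such disks reduces $B^\pm$ to balls and exhibits them as handlebodies built from balls by $1$-handle attachments. Capping $\partial B$ off with another $3$-ball and extending $F_0$ with a disk then produces a positive genus Heegaard splitting of $S^3$, which by Theorem~\ref{thm2.7} (Waldhausen) is stabilized. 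The stabilizing pair, isotoped off the capping ball via the contractibility of the attaching disks $D_1, D_2$, descends to the desired $E^+ \subset B^+$ and $E^- \subset B^-$.

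Finally I verify that $\partial E^\pm$ are essential on the full surface $F$, not merely on $F_0$: if $\partial E^+$ bounded a disk $G \subset F$, then $E^+ \cup G$ would be a $2$-sphere bounding a $3$-ball in $M$ by irreducibility, across which an isotopy would reduce $|F \cap (D_1 \cup D_2)|$ below $|\alpha|$, contradicting minimality. Hence $E^+, E^-$ are compression disks of $F$ on opposite sides of $F$ in $M$ with $|\partial E^+ \cap \partial E^-| = 1$, so $F$ is stabilized. The main obstacle I anticipate is the handlebody claim for $B^\pm$: Dehn's lemma only guarantees compression disks on one side of the current surface at each step, so making the inductive argument go through requires careful control of which side each successive compression occurs on. As a backup, I would construct the dual pair directly inside $B$ by choosing a non-separating simple closed curve $\mu$ on $F_0$ which bounds a disk in $B$ via Dehn's lemma, together with a symplectically dual curve $\nu$ on $F_0$, and then arguing that the reducing pair $D_1, D_2$ together with the structure of $B$ forces $\mu$ and $\nu$ to bound on opposite sides.
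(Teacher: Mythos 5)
Your overall route is the same as the paper's: the reducing sphere $D_1\cup D_2$ bounds a ball $B$ by irreducibility of $M$, the piece $F_0=F\cap B$ capped off with a disk becomes a positive-genus Heegaard surface of $\widehat{B}=S^3$, and Waldhausen's theorem (Theorem~\ref{thm2.7}) supplies a stabilizing pair that is read back into $M$. Your preliminary innermost-circle cleanup is vacuous here, since compression disks already satisfy $D_i\cap F=\partial D_i$ by definition, and your final check that $\partial E^{\pm}$ are essential on all of $F$ (not just on $F_0$) and can be isotoped off the capping ball is a reasonable refinement that the paper leaves implicit.

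The one genuine problem is exactly the step you flag as the crux: that $F_0$ splits $B$ into two handlebodies $B^{\pm}$, so that $\widehat{F_0}$ really is a Heegaard surface of $S^3$. Your proposed justification does not go through. That an essential curve on $\partial B^{\pm}$ is null-homotopic in the ball $B$ gives, via Dehn's lemma and the loop theorem, only an embedded disk somewhere in $B$; such a disk has no reason to lie inside $B^{+}$ or $B^{-}$, and to run your compression induction you would need $\partial B^{\pm}$ to be compressible \emph{in} $B^{\pm}$. This can fail: if $F_0$ is, say, the frontier of a knotted solid torus tubed to $\partial B$, then the outer complementary region is a nontrivial knot exterior with incompressible boundary, and the induction never starts. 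The backup sketch with a symplectically dual pair $\mu,\nu$ on $F_0$ has the same defect, since Dehn's lemma again only places the disk in $B$, not on a prescribed side of $F_0$. To be fair, the paper's own proof disposes of this point in one unproved clause ("$F_1$ splits $D^3$ into two handlebodies of the same positive genus"), so you have reproduced and honestly exposed, rather than created, the gap; but as written your argument for the handlebody claim is not a proof, and closing it requires input beyond the irreducibility of $M$ alone (for instance, the additional hypotheses in force where the proposition is actually applied, as in Corollaries~\ref{cc3.7} and~\ref{cc3.8}, where $M$ is a handlebody).
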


\begin{proof}
By assumption, there exist compression disks $D_1$ and $D_2$ of $F$ in $M$ which are lying in the two sides of $F$, respectively, such that $\partial D_1=\partial D_2=\alpha$. Since $M$ is irreducible, the 2-sphere $S= D_1\cup D_2$ bounds a 3-ball $D^3$ in $M$. Since $S\cap F=\alpha$ and $S$ is separating in $M$, $\alpha$ is separating in $F$, and $D^3\cap F=F_1$ splits $D^3$ into two handlebodies of the same positive genus. Thus, $F_1$ can be extended to a Heegaard surface $\widehat{F_1}$ of positive genus in $\widehat{D^3}=S^3$. By Theorem~\ref{thm2.7},  any positive genus Heegaard splitting of $S^3$ is stabilized. Hence $\widehat{F_1}$ is stabilized in $S^3$, then $F$ is stabilized in $M$.
\end{proof}

\begin{prop} \label{pp3.2}
Let $M$ be an amalgamation of $\mathcal{M}$ along $F_1$ and $F_2$, where $\mathcal{M}=\{M\}$ or $\mathcal M = \{M_1,M_2\}$, and in the later case, $F_i\subset \partial M_i$, $i=1,2$. Set $F=F_1=F_2$ in $M$. Then $F$ is incompressible in $M$ if and only if $\{F_1,F_2\}$ is incompressible in $\mathcal{M}$.
\end{prop}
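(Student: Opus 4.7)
The plan is to prove the two implications separately. The forward direction (if some $F_i$ is compressible in $\mathcal{M}$, then $F$ is compressible in $M$) is essentially immediate: any compression disc $D$ for $F_i$ in $\mathcal{M}$ embeds into $M$ as a properly embedded disc whose boundary lies on $F$ via the identification of $F_i$ with $F$ by the amalgamating homeomorphism; since this homeomorphism preserves essentiality of curves, $\partial D$ remains essential in $F$, and $D \cap F = \partial D$ because $D$ lies in a single component of $\mathcal{M}$. So $D$ witnesses compressibility of $F$ in $M$, and the degenerate forms of compressibility transfer by the same reasoning.

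The backward direction is the substantive one, and I would argue its contrapositive by the classical innermost-disc surgery. Pick a compression disc $D$ for $F$ in $M$, put $D$ transverse to $F$, and minimize $|D \cap F|$ over all such choices. Then $D \cap F = \partial D \cup \mathcal{C}$, where $\mathcal{C}$ is a finite (possibly empty) collection of simple closed curves in the interior of $D$. If $\mathcal{C} = \emptyset$, then $D$ lies entirely in one component of $\mathcal{M}$ and is itself a compression disc for the appropriate $F_i$ in $\mathcal{M}$, as required.

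If $\mathcal{C} \neq \emptyset$, I would first show by cut-and-paste that minimality forces every $c \in \mathcal{C}$ to be essential in $F$. Suppose instead some $c \in \mathcal{C}$ bounds a disc in $F$. Among all such inessential curves of $D \cap F$, choose $c$ so that the disc $D_F \subset F$ bounded by $c$ is innermost on $F$, that is, $D_F \cap D = c$. Let $D_D \subset D$ be the subdisc bounded by $c$ on $D$. Replacing $D_D$ by a small push-off of $D_F$ into the side of $F$ that $D_D$ occupies near $c$ yields a properly embedded disc $\tilde D \subset M$ with $\partial \tilde D = \partial D$; the innermost-on-$F$ choice guarantees that the push-off meets $D \setminus D_D$ only along $\partial D_F$, so $\tilde D$ is embedded, and $|\tilde D \cap F| \leq |D \cap F| - |D_D \cap F| < |D \cap F|$, contradicting minimality. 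Hence every curve of $\mathcal{C}$ is essential in $F$. Now pick any $c' \in \mathcal{C}$ innermost on $D$; it bounds a subdisc $D' \subset D$ with $D' \cap F = c' = \partial D'$, essential in $F$. Since $D'$ lies in a single component of $\mathcal{M}$, this is the desired compression disc for the corresponding $F_i$.

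The main point to handle carefully is the reduction step, and specifically the fact that $c$ must be chosen innermost \emph{on $F$} (not on $D$): this is what ensures that the interior of $D_F$ contains no further curves of $D \cap F$, so that the push-off stays embedded and the intersection count strictly drops. The rest of the argument is unchanged whether the amalgamation is along a separating or a non-separating surface, so both cases in the statement are handled uniformly.
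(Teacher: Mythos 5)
Your proof is correct and takes essentially the same route as the paper: the forward direction is immediate, and the backward direction amounts to observing that a compression disc for $F$ in $M$ lies in a single component of $\mathcal{M}$ and hence compresses $F_1$ or $F_2$. Note that under the paper's definition a compression disc already satisfies $D\cap F=\partial D$, so the innermost-curve surgery you carry out to remove interior intersections is superfluous (though harmless); the paper's proof is just the one-line observation.
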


\begin{proof}
The necessity is clear. Now assume that $\{F_1,F_2\}$ is incompressible in $\mathcal M$. If $F$ is compressible in $M$, there exists a disk $D$ in $M$, such that $D\cap F=\partial D$, and $\partial D$ is essential in $F$. Thus $D$ is a compression disk for either $F_1$ or $F_2$ in $\mathcal{M}$, contradicting to the assumption.
\end{proof}

In the following, we always assume that $M$ is an amalgamation of a collection $\mathcal{H}$ of handlebodies along $F_1$ and $F_2$, where $F=F_1=F_2$ is a compact connected orientable surface in $M$, and $\mathcal{H}=\{H_1,H_2\}$ or $\mathcal H = \{H\}$, depending on that $F$ is separating or non-separating in $M$, except for the other stated. We will focus on that $M$ is a handlebody, and split discussions into two cases: when the amalgamated surface $F$ is incompressible or compressible in $M$.

\section{Incompressible surfaces in a handlebody} \label{sec3}

In this section we consider the case when an amalgamated surface $F$ is incompressible.

\begin{thm}\label{tt3.3}
Let $M$ be a compact connected orientable 3-manifold. Then $M$ is a handlebody if and only if any 2-sided incompressible surface $F$ in $M$ can be $\partial$-compressed into a collection $S$ of essential disks in $M$, such that each component of $M\setminus F$ is a handlebody.
\end{thm}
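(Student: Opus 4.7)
The plan is to prove both directions, leveraging Lemma~\ref{lem2.5} (incompressible $\partial$-incompressible surfaces in handlebodies of genus $\geq 2$ are essential disks) and Lemma~\ref{lem2.9} (existence of a properly embedded 2-sided incompressible surface with non-empty boundary).

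For the forward direction I start with a 2-sided incompressible surface $F$ in the handlebody $M$. The first task is to produce the collection $S$ of essential disks. I would split on $g(M)$: for $g(M)=0$ the surface $F$ is already a disk; for $g(M)=1$ Lemma~\ref{lem2.5}(1) forces every non-disk component of $F$ to be a boundary-parallel annulus, which admits an obvious $\partial$-compression; for $g(M)\geq 2$ Lemma~\ref{lem2.5}(2) applied component-wise gives that any incompressible $F$ not already a disjoint union of essential disks is $\partial$-compressible. Since each $\partial$-compression increases $\chi(F)$ by $1$ while preserving incompressibility (standard innermost-disk argument), iterating terminates in a collection $S$ of essential disks. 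The second task is to show each component of $M\setminus F$ is a handlebody; I would argue by induction on $g(M)$, selecting an essential disk $D\subset M$, using innermost-disk and outermost-arc arguments to isotope $F$ so that $F\cap D$ consists solely of arcs essential in $F$, cutting along $D$ to pass to a handlebody $M'$ of smaller genus containing an incompressible surface $F'$, applying induction to $M'\setminus F'$, and finally reassembling $M\setminus F$ from $M'\setminus F'$ by gluing along the disk components of $D\setminus F$. The point is that gluing handlebodies along disks produces handlebodies.

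For the backward direction the key move is to use the hypothesis twice. By Lemma~\ref{lem2.9} there exists a properly embedded 2-sided incompressible surface $F_0$ in $M$ with $\partial F_0\neq\emptyset$. Applying the hypothesis to $F_0$ yields a collection $S$ of essential disks obtained from $F_0$ by $\partial$-compression. Since an essential disk is itself incompressible (it carries no nontrivial simple closed curve and does not cut off a $3$-ball), $S$ is a 2-sided incompressible surface in $M$, so applying the hypothesis a second time with $F=S$ gives that each component of $M\setminus S$ is a handlebody. The manifold $M$ is then recovered from $M\setminus S$ by gluing along the pairs of boundary disks corresponding to $S$, which is the dual of cutting along essential disks, i.e.\ attaching $1$-handles; attaching $1$-handles to a disjoint union of handlebodies yields a handlebody, and we conclude.

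The main obstacle I anticipate is the inductive step in the forward direction, namely proving that an incompressible surface in a handlebody cuts it into handlebodies. The delicate points are (a) arranging $F\cap D$ to consist solely of arcs essential in $F$ via innermost-disk and outermost-arc moves, which requires both the incompressibility of $F$ and the irreducibility of $M$; (b) verifying that the image $F'$ of $F$ in the cut-open handlebody $M'$ remains incompressible so that induction applies; and (c) checking that reassembly of handlebodies along disks is again a handlebody. The remaining steps are routine bookkeeping around Lemmas~\ref{lem2.5} and~\ref{lem2.9}, together with the standard fact that $\partial$-compression preserves incompressibility.
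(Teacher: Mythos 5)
Your overall strategy is sound and both directions are in the right spirit, but you take a genuinely different route from the paper in the necessity direction, and that route has one real gap. The paper does not invoke Lemma~\ref{lem2.5} at all: it fixes a complete system $\mathcal{D}$ of meridian disks for $M$ minimizing $|\mathcal{D}\cap F|$, shows (innermost circle, outermost arc) that $\mathcal{D}\cap F$ consists of arcs essential in $F$, and then takes an outermost subdisk of $\mathcal{D}$ cut along $F$ as the $\partial$-compressing disk, inducting on a complexity $C(F)=-\chi(F)+\#\mathrm{components}$; the statement that $M\setminus F$ is a union of handlebodies then falls out of the resulting $\partial$-compression hierarchy rather than being proved separately. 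The key point your proposal misses is that Lemma~\ref{lem2.5}(2), applied ``component-wise,'' only produces for each non-disk component $F_i$ a disk $\Delta_i$ with $\Delta_i\cap F_i$ a single essential arc --- it says nothing about $\Delta_i\cap F_j$ for $j\neq i$, so it does not show that the disconnected surface $F$ is $\partial$-compressible. You cannot repair this by first cutting $M$ along $F-F_i$ and applying the lemma inside the resulting piece, because knowing that piece is a handlebody is exactly your Task~2, which would make the argument circular. The fix is to run the innermost-circle/outermost-arc analysis on $\Delta_i$ itself against $F-F_i$ (or to adopt the paper's disk-system device, which delivers a $\partial$-compressing disk disjoint from the rest of $F$ for free). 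A smaller shared soft spot: ``each $\partial$-compression increases $\chi$ by $1$, so the process terminates'' needs a word when the compressing arc separates its component, since then the number of components also goes up; the paper's own claim $C(S')=C(S)-1$ has the same blind spot.

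Your Task~2 (induction on $g(M)$ with a single essential disk, isotoping $F$ to meet it in essential arcs, and reassembling along the disk pieces) is the classical Schultens-style proof of Corollary~\ref{cc3.4} and is a legitimate alternative to the paper's hierarchy argument; the delicate points you name --- incompressibility of $F'$ in $M'$ and regluing handlebodies along disks --- do work out. Your sufficiency direction is actually more careful than the paper's one-line conclusion: applying the hypothesis a second time to the collection $S$ of essential disks (which is indeed incompressible) is precisely what is needed to get that $M\setminus S$ is a union of handlebodies under the literal reading ``each component of $M\setminus F$ is a handlebody,'' after which regluing along disk pairs gives a handlebody. Do note, as the paper implicitly does, that Lemma~\ref{lem2.9} requires $\partial M\neq\emptyset$ and fails for a $3$-ball, so that case must be set aside separately.
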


\begin{proof}	
First assume that $M$ is a handlebody of genus $n$. If $n=0$, then $M$ is a 3-ball and there is no incompressible surface in $M$. For $n\geq 1$, let $F$ be an orientable incompressible surface in $M$. Since $M$ contains no closed incompressible surfaces, each component of $F$ is a surface with non-empty boundary. Choose a complete system $\mathcal{D}=\{D_1,\cdots,D_n\}$ of disks in $M$ such that $\mathcal{D}$ is in general position with $F$ and $D=\bigcup_{i=1}^n D_i$ is such number of intersections $|D\cap F|$ is minimal over all such complete systems of disks and all isotopies of $F$ in $M$.

{\bf Claim 1.} \textit{$D\cap F$ has no circle component.}

Otherwise, let $\alpha$ be an innermost circle component of $D_i \cap F$ for some $i$. Then $\alpha$ bounds a disk $\Delta$ in $D_i$ with $F\cap\operatorname{int}(\Delta)=\emptyset$. Since $F$ is incompressible, $\alpha$ bounds a disk $\Delta'$ in $F$. Then $\Delta\cup\Delta'$ is a 2-sphere in $M$. By the irreducibility of $M$, $\Delta\cup\Delta'$ bounds a 3-ball in $M$, see Fig.~\ref{fig4}. By using the 3-ball, we can isotope $F$ to $F'$ by pushing $\Delta'$ to cross $\Delta$ in $M$ such that $|D\cap F'|<|D\cap F|$, a contradiction to the minimality of $|D\cap F|$.
\begin{figure}[!htb]
	\centering
	\includegraphics[width=0.3\textwidth]{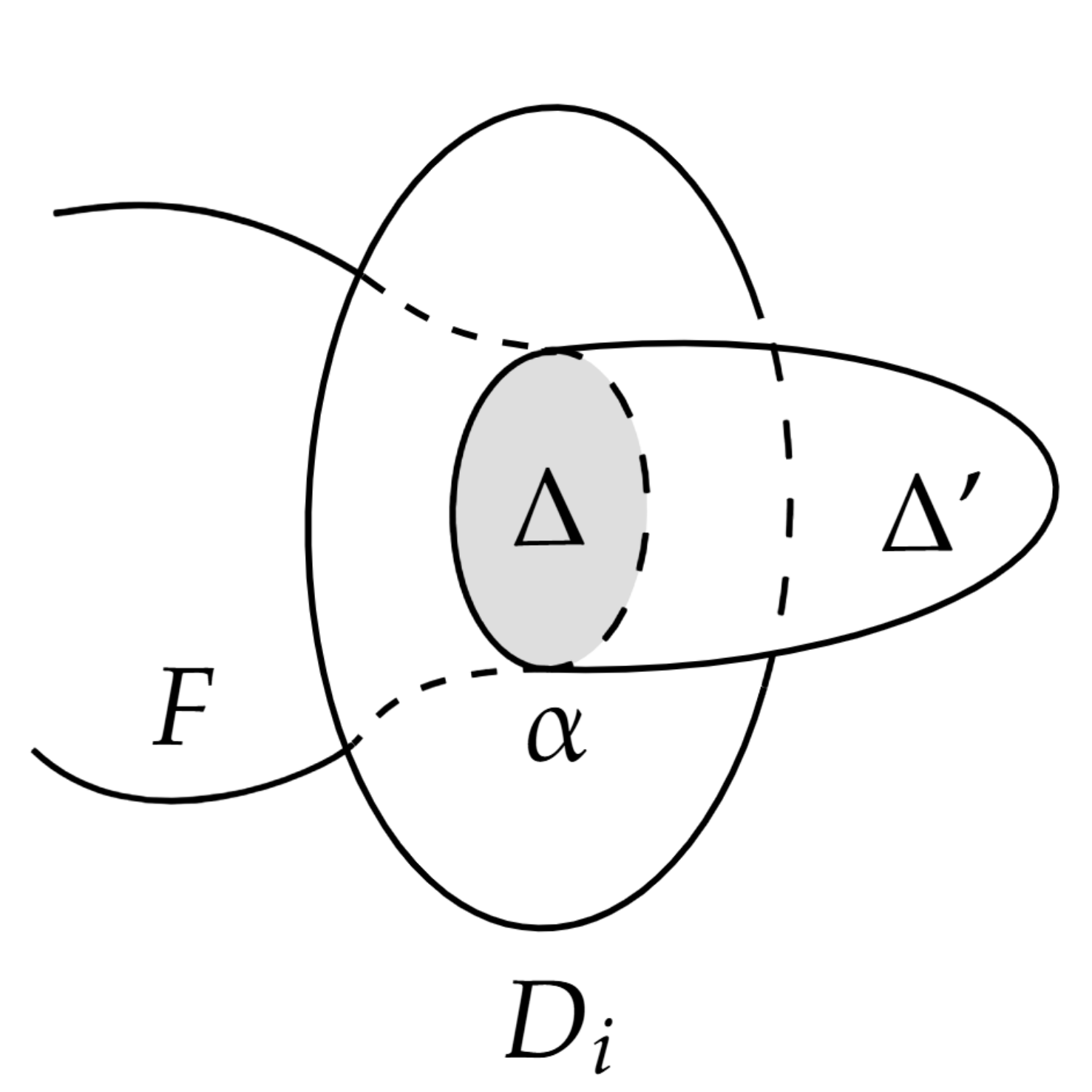}
    \caption{The 2-sphere $\Delta\cup\Delta'$.}
    \label{fig4}
\end{figure}

{\bf Claim 2.} \textit{If $D\cap F\ne\emptyset$, then each component of $D\cap F$ is an essential arc in $F$.}

Otherwise, let $\beta$ be a component of $D\cap F$ (say, $D_j\cap F$ for some $j$) which is outermost on $F$, i.e., $\beta$ cuts out of a disk $\Delta$ from $F$ with $D\cap\operatorname{int}(\Delta)=\emptyset$. Then $\beta$ cuts $D_j$ into two disks $D_j^1$ and $D_j^2$. Set $D_j'=\Delta\cup D_j^1$ and  $D_j''=\Delta\cup D_j^2$, see Fig.~\ref{fig5}.
\begin{figure}[!htb]
	\centering
	\includegraphics[width=1.\textwidth]{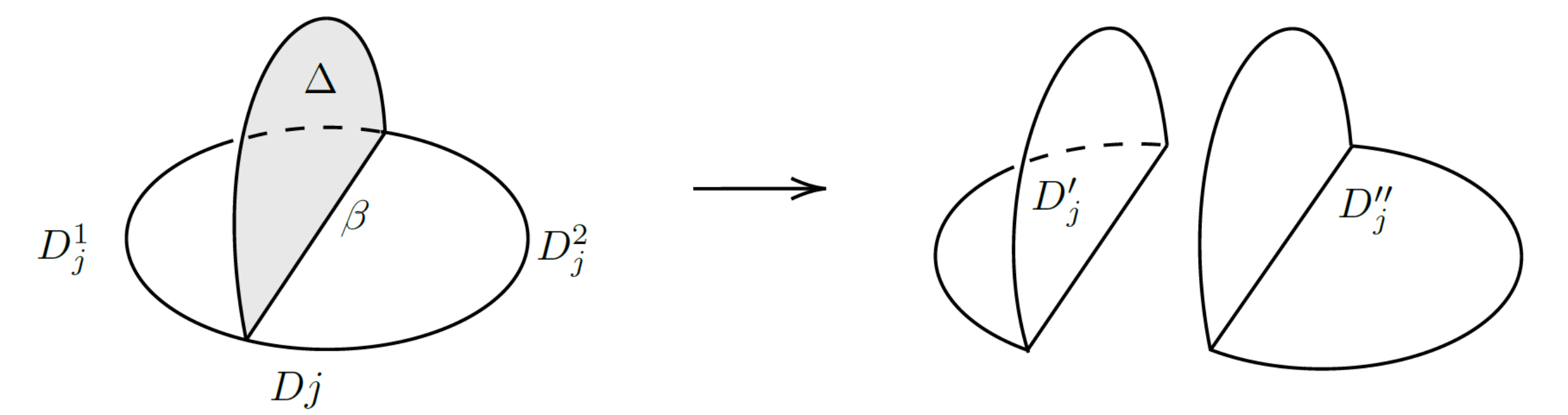}
    \caption{$D_j'$ and $D_j''$.}
    \label{fig5}
\end{figure}

Denote $\mathcal{D}'=(\mathcal{D}\setminus D_j)\cup D_j'$ and $\mathcal{D}''=(\mathcal{D}\setminus D_j)\cup D_j''$.
Then at least one of $\mathcal{D}'$ and $\mathcal{D}''$, say, $\mathcal{D}'$, is a complete system of disks for $M$. Let us denote  by $D'$ the union of all disks in $\mathcal{D}'$. Then after a small isotopy of $\mathcal{D}'$, we have $|D'\cap F|<|D\cap F|$, again a contradiction to the minimality of $|D\cap F|$.
Claim 2 implies that $D$ is disjoint from all the disk components of $F$.

Let $S=S_{g,b}$ be the compact connected surface with genus $g$ and $b$ boundary components. Denote
\begin{equation}
C(S_{g,b}) = -\chi(S_{g,b}) +1 = 2g + b - 1. \label{eqn3.1}
\end{equation}
Let $\alpha$ be an essential arc properly embedded in $S$ ($b>0$). Denote by $S'$ the surface obtained by cutting $S$ open along $\alpha$. It is clear that $C(S')=C(S)-1$. If $F$ has $k$ components $F_1,\ldots,F_k$, set $\chi(F)=\sum_{i=1}^k\chi(F_i)$, and call $C(F) = \sum_{i=1}^k C(F_i)$ the complexity of $F$. It is clear that $C(F)\geq 0$, and $C(F)=0$ if and only if each component $F_i$ of $F$ is a disk, $1\leq i\leq k$. Thus, the conclusion holds when $C(F)=0$. We will prove the necessity of the theorem by induction on $C(F)$. Assume that the conclusion holds for all orientable incompressible surface $F'$ in $M$ with $C(F')=p\geq 0$.

Let $F$ be an orientable incompressible surface in $M$ with $C(F)=p+1$.  As well as above, let us choose a complete system $\mathcal{D}=\{D_1,\cdots,D_n\}$ of disks in $M$ such that $\mathcal{D}$ is in general position with $F$ and $D=\bigcup_{i=1}^n D_i$ is such number of intersections $|D\cap F|$ is minimal over all such complete systems of disks and all isotopies of $F$ in $M$. Then $D\cap F\ne \emptyset$. Let $\gamma$ be an arc component of $D\cap F$ which is outermost in $D$. Then $\gamma$ cuts out of a disk $E$ from $D$ with $F\cap\operatorname{int}(E)=\emptyset$. By Claims 1 and 2, $\gamma$ is essential in $F$, so $E$ is a $\partial$-compression disk for $F$. Let $F^*$ be the surface obtained by doing a $\partial$-compression in $M$ on $F$ along $E$. Then $C(F^*)=C(F)-1=p$. By the inductive assumption, $F^*$ can be $\partial$-compressed into a collection $\mathcal{D}^*$ of pairwise disjoint essential disks in $M$, such that each component of $M\setminus \mathcal{D}^*$ is a handlebody.

This completes the proof of necessity of the theorem.

Now assume that $M$ is a 3-manifold with $\partial M \neq \emptyset$, which satisfies the given condition. By Lemma~\ref{lem2.9}, there exists a 2-sided incompressible surface $F$ in $M$ with $\partial F\ne \emptyset$. Then $F$ can be $\partial$-compressed into a collection $\mathcal{D}^*$ of pairwise disjoint essential disks in $M$, such that each component of $M\setminus \mathcal{D}^*$ is a handlebody. It follows that $M$ is a handlebody.
\end{proof}

Let $H$ be a handlebody, and $F$ an 2-sided incompressible surface (maybe non-con\-nec\-ted) in $H$. Since $H$ contains no closed incompressible surfaces, each component of $F$ is a surface properly embedded in $H$ with non-empty boundary. Let us denote $\mathcal{H}^0=\{H\}$, $F_0=F$, and $\mathcal{H}^1=H\setminus F_0=\{H_1,\ldots, H_q\}$. Then $H=\mathcal{H}^1_{F_0}$. If $F_0$ has a non-disk component, then as shown in the proof of Theorem~\ref{tt3.3}, there exists a $\partial$-compression disk, say $E_0$, of $F_0$ in $H$, such that $E_0$ is an essential disk of $H_{j_1}$ for some $1\leq j_1\leq q$, and $E_0\cap F_0= \ell_1$ an essential arc in $F_0$. Let us denote $\ell'=\partial E_0\cap \partial H=\overline{\partial E_0 -\ell_1}$. Let $N_1=E_0\times [0,1]$ be a regular neighbourhood of $E_0$ in $H_{j_1}$ such that $N_1\cap \partial H = \ell'\times [0,1]$ and $N_1\cap F_0 = \ell_1\times I$. Consider
$$
F_1=(F_0 - \ell_1\times [0,1]) \cup E_0 \times \{0,1\},
$$
see Fig.~\ref{fig6}.
\begin{figure}[!htb]
	\centering
	\includegraphics[width=1.0\textwidth]{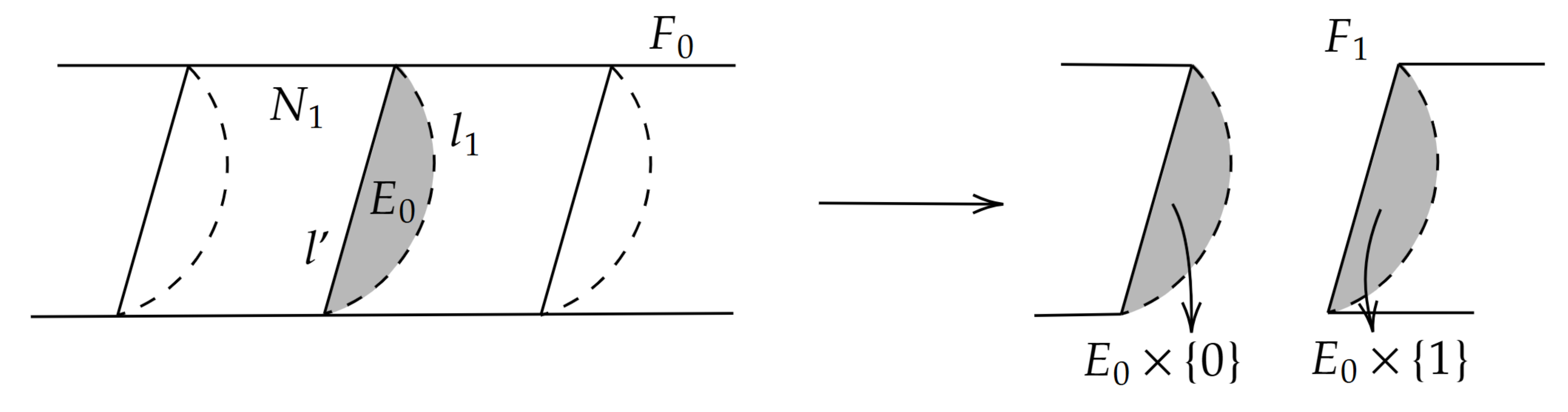}
    \caption{$\partial$-compression on $F_0$ along $E_0$.}
    \label{fig6}
\end{figure}

Then $F_1$ is the surface obtained by doing a $\partial$-compression in $H$ along $E_0$. Denote $\mathcal{H}^2=H\setminus F_1$. Then $H = {\mathcal H}^1_{F_0} =  {\mathcal H}^2_{F_1}$. If $F_1$ has a non-disk component, we may continue the process to construct $F_2$ and $\mathcal{H}^3$. Finally, we get $F_{k}$ and $\mathcal{H}^{k+1}$, where $F_{k}$ is a collection of essential disks in $H$, and $\mathcal{H}^{k+1}$  is a collection of handlebodies and $H={\mathcal H}^{k+1}_{F_k}$. We call
\begin{equation}
({\mathcal{H}^0},F_0,E_0)\to({\mathcal{H}^1},F_1,E_1)\to \cdots\to ({\mathcal{H}^k},F_k,\emptyset)\to \mathcal{H}^{k+1} \label{eqn3.2}
\end{equation}
a \textit{$\partial$-compression hierarchy} of $H$, and call
$$
({\mathcal{H}^0}, F_0, E_0) \to ({\mathcal{H}^1}, F_1, E_1)\to \cdots \to ({\mathcal{H}^p}, F_p, E_p)
$$
a \textit{partial $\partial$-compression hierarchy} of $H$ for $p<k$. Note that, by our choice of $E_i$, $1\leq i \leq k$, $E_i$ is an essential disk in a handlebody in $\mathcal{H}$, and they are pairwise disjoint and non-parallel; $E_i \cap F_i$ is a single essential arc in $F_i$ with $E_i\cap F_j = \emptyset$ for $0\leq i < j \leq k$; moreover, each $\mathcal{H}^{i}$ is a collection of handlebodies, $0 \leq i \leq k+1$.

Theorem~\ref{tt3.3} can be reformalated as follows.

\vskip 2mm
\noindent
{\bf Theorem 3.1$'$} \textit{Let $M$ be a compact connected orientable 3-manifold. Then $M$ is a handlebody if and only if for any orientable incompressible surface $F$ in $M$, $M$ can be splitted into a collection $\mathcal{H}$ of handlebodies along $F$, and there exists a $\partial$-compression hierarchy~(\ref{eqn3.2})
for $M$.}

\vskip 2mm
As a direct consequence of Theorem~3.1$'$, we obtain a new proof of the following known result by Schultens, see~\cite{J.Schu}.

\begin{cor} \label{cc3.4}
Suppose that $H$ is a handlebody of positive genus, and $F$ is an incompressible surface in $H$. Then each component of $H\setminus F$ is a handlebody.
\end{cor}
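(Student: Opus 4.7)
The plan is to invoke the necessity direction of Theorem~\ref{tt3.3}, or equivalently Theorem~3.1$'$, applied to $M = H$. Since $H$ is a handlebody, this result asserts that every $2$-sided incompressible surface in $H$ admits a $\partial$-compression hierarchy terminating in a collection of essential disks, and that in particular each component of $H\setminus F$ is a handlebody. The corollary is precisely the last clause of this conclusion.

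Before quoting the theorem one must verify its two standing hypotheses on $F$: incompressibility and $2$-sidedness. Incompressibility is given. For $2$-sidedness, the paper's global convention stipulates that all surfaces and $3$-manifolds are orientable, so $F$ is orientable; since $H$ is also orientable, $F$ is automatically $2$-sided. With both hypotheses in hand, Theorem~3.1$'$ produces a hierarchy $(\mathcal{H}^0,F_0,E_0)\to\cdots\to\mathcal{H}^{k+1}$ in which $\mathcal{H}^1 = H\setminus F$ is, by construction, a collection of handlebodies.

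There is no genuine obstacle: the statement is a direct read-off of Theorem~\ref{tt3.3}. The substantive work --- the induction on the complexity $C(F)=\sum(2g_i+b_i-1)$, the minimality-based outermost-arc arguments of Claims~1 and~2 that extract a $\partial$-compression disk, and the verification that each $\partial$-compression step preserves the property that the complementary pieces are handlebodies --- has already been done inside the proof of Theorem~\ref{tt3.3}, so the proof of the corollary consists in little more than quoting that theorem.
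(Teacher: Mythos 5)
Your argument reduces the corollary to the clause ``such that each component of $M\setminus F$ is a handlebody'' in the statement of Theorem~\ref{tt3.3}, but that clause is not something the proof of Theorem~\ref{tt3.3} actually establishes for $M\setminus F$: the induction there produces a terminal disk system $\mathcal{D}^*$ and shows that each component of $M\setminus \mathcal{D}^*$ is a handlebody, and the construction of the $\partial$-compression hierarchy merely \emph{names} the pieces $\mathcal{H}^1=H\setminus F_0$ without proving anything about them. (The remark at the end of that construction that ``each $\mathcal{H}^i$ is a collection of handlebodies'' is likewise unjustified at that point; the corollary is precisely where it gets proved.) So writing that $\mathcal{H}^1=H\setminus F$ is ``by construction'' a collection of handlebodies, or that the corollary is ``a direct read-off'' of the theorem, assumes exactly what is to be shown; as it stands the argument is circular.

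The missing step --- which is the entire content of the paper's proof --- is a backward induction up the hierarchy $(\mathcal{H}^0,F_0,E_0)\to\cdots\to(\mathcal{H}^k,F_k,\emptyset)\to\mathcal{H}^{k+1}$. The terminal collection $\mathcal{H}^{k+1}=H\setminus F_k$ consists of handlebodies because $F_k$ is a collection of essential disks and cutting a handlebody along properly embedded disks yields handlebodies. For each $i$, $\mathcal{H}^i$ is recovered from $\mathcal{H}^{i+1}$ by amalgamating along the two copies of the disk $E_i$, and an amalgamation of handlebodies along disks (a boundary connected sum, or a $1$-handle attachment in the self-gluing case) is again a collection of handlebodies; descending from $i=k+1$ to $i=1$ gives that $\mathcal{H}^1=H\setminus F$ is a collection of handlebodies. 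Your verification of the hypotheses of Theorem~\ref{tt3.3} (incompressibility is given; $2$-sidedness follows from the orientability conventions) is fine, but the disk-amalgamation induction must be supplied explicitly.
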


\begin{proof}
By Theorem 3.1$'$, there exists a $\partial$-compression hierarchy~(\ref{eqn3.2}) for $H$, and $\mathcal{H}^{k+1}$ is a collection of handlebodies. For each $i$, $1\leq i\leq k$, $\mathcal{H}^i$ is an amalgamation of $\mathcal{H}^{i+1}$ along $E_i$, so $\mathcal{H}^i$ is a collection of handlebodies. Thus $M\setminus F=\mathcal{H}^1$ is a collection of handlebodies.
\end{proof}

Let $M$ be a 3-manifold with non-empty boundary $\partial M$, and $F$ a surface properly embedded in $M$ which has no trivial disk components. Let $\alpha$ be a simple arc in $\partial M$ with $\alpha\cap\partial F=\partial\alpha=\{A,B\}$, and $\Delta$ is a disk in $F$ such that
\begin{itemize}
\item $\Delta\cap \partial F=\partial\Delta\cap \partial F=\beta$ is an arc in $\partial\Delta$,
\item $\beta\cap\alpha={A}\in\text{Int}(\beta)$,
\item and $\overline{\partial\Delta-\beta}=\gamma$ is an arc in $\partial\Delta$ with $\partial\beta\cap\partial\gamma=\partial\beta=\partial\gamma$ and $\beta \cup \gamma =\partial\Delta$.
\end{itemize}
Let $N=\alpha\times \Delta$ be a cylinder embedded in $M$, such that
\begin{itemize}
\item $N\cap \partial M=\alpha\times\beta$,
\item $N\cap F=\partial\alpha\times\Delta=\Delta\cup\Delta'$, where $\Delta'$ is a disk in $F$, therefore $F\cap (\alpha\times \gamma)$ $=(\partial\alpha)\cap F=\{A\}\times \gamma\cup \{B\}\times\gamma$,
\item $\Delta'\cap\partial M=\{B\}\times\beta$, which is a arc in $\Delta'$,
\end{itemize}
see Fig.~\ref{fig7}. Set $F(\alpha)=\overline{F\setminus (\Delta\cup\Delta')}\bigcup \alpha\times\gamma$. Then $F(\alpha)$ is a surface properly embedded in $M$. We call $F(\alpha)$ a {\em band sum} of $F$ along $\alpha$.
\begin{figure}[!htb]
	\centering
	\includegraphics[width=0.5\textwidth]{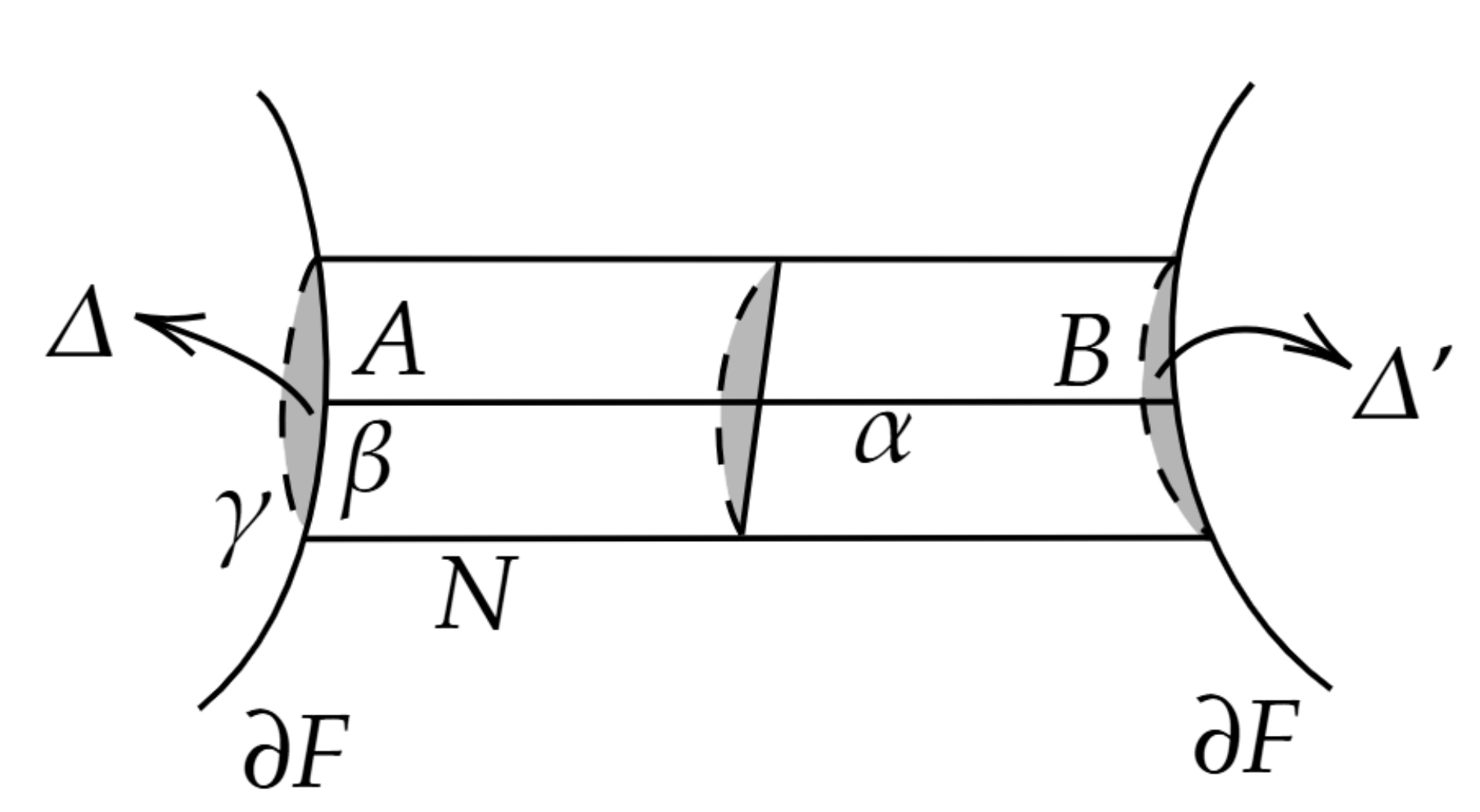}
	\caption{$F(\alpha)$ a \textit{band sum} of $F$ along $\alpha$.} \label{fig7}
\end{figure}

It is clear that $\Delta$ is a $\partial$-compressing disk of $F(\alpha)$, and the surface obtained by doing a $\partial$-compression on $F(\alpha)$ along $\Delta$ is isotopic to $F$ in $M$. We call $\alpha$ the arc \textit{associated} to $\Delta$ and denote $\alpha$ by $\alpha_\Delta$. In particular,
\begin{itemize}
\item if $\partial\alpha$ is lying in the boundaries of different components $F_1$ and $F_2$ of $F$, and $F=F_1\cup F_2$, we also say that $F(\alpha)$ is a {\em band sum} of $F_1$ and $F_2$ along $\alpha$;
\item if $F$ is connected, we say that $F(\alpha)$ is a \textit{self-band sum} of $F$ along $\alpha$.
\end{itemize}

\begin{cor} \label{cc3.6}
Let $M$ be a handlebody of positive genus. Then for any incompressible surface $F$ in $M$, there exists a collection $S_0$ of pairwise disjoint essential disks in $M$, and a sequence of incompressible surfaces $S_0,S_1,\ldots,S_k$ in $M$, such that $S_{i+1}$ is a band sum of $S_i$ along $\alpha_i$, $0\leq i\leq k-1$, and $S_k=F$.
\end{cor}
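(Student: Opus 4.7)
The plan is to invoke the $\partial$-compression hierarchy produced by Theorem~3.1$'$ and then read it in reverse, using the fact, built into the definitions just before the corollary, that a band sum is precisely the inverse operation to a $\partial$-compression.

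First I would apply Theorem~3.1$'$ to $M$ and $F$. Since $M$ is a handlebody of positive genus and $F$ is incompressible, there exists a $\partial$-compression hierarchy
$$(\mathcal{H}^0,F_0,E_0)\to(\mathcal{H}^1,F_1,E_1)\to\cdots\to(\mathcal{H}^k,F_k,\emptyset)\to\mathcal{H}^{k+1}$$
with $F_0=F$ and $F_k$ a collection of pairwise disjoint essential disks in $M$. Each $E_i$ is a $\partial$-compression disk for $F_i$ whose intersection with $F_i$ is a single essential arc, and $F_{i+1}$ is the result of the $\partial$-compression of $F_i$ along $E_i$. Setting $S_j:=F_{k-j}$ for $0\le j\le k$ gives $S_0=F_k$ a collection of pairwise disjoint essential disks and $S_k=F_0=F$, which is the required indexing.

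The heart of the argument is then to verify two things. First, by comparing the pictures (Fig.~\ref{fig6} and Fig.~\ref{fig7}), $F_i$ is obtained from $F_{i+1}$ by a band sum along the arc $\alpha_{E_i}\subset\partial M$ associated to the $\partial$-compression disk $E_i$; indeed this is literally the definition in the paragraph preceding the statement, specialized to $\Delta=E_i$. Hence $S_{j+1}=F_{k-j-1}$ is a band sum of $S_j=F_{k-j}$ along $\alpha_j:=\alpha_{E_{k-j-1}}$, which is precisely the recursion demanded by the corollary. Second, each $S_j$ must be incompressible in $M$. I would record this as the observation that if $F_i$ is incompressible in $M$, then so is $F_{i+1}$: any hypothetical compressing disk $D$ for $F_{i+1}$ can be put in general position with the band region $N_i=E_i\times[0,1]$ and, by standard innermost-disk and outermost-arc arguments together with the irreducibility of $M$ and the incompressibility of $F_i$, modified away from $N_i$ to produce a compressing disk for $F_i$. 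Starting from $F_0=F$ this gives incompressibility of every $F_i$, hence of every $S_j$.

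The only nontrivial point, and what I view as the main obstacle, is exactly this preservation of incompressibility under $\partial$-compression; I would state it as a short auxiliary lemma and prove it by the innermost/outermost argument sketched above. Everything else is bookkeeping on the hierarchy produced by Theorem~3.1$'$, together with the observation that band sum is the inverse of $\partial$-compression built into the definitions just before the corollary.
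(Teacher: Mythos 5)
Your proposal is correct and follows essentially the same route as the paper: the paper's proof simply applies Theorem~3.1$'$, sets $S_i=F_{k-i}$ to reverse the $\partial$-compression hierarchy, notes that each step is the band sum associated to $E_{k-i-1}$, and declares that the conclusion follows directly. The one point you treat as the main obstacle --- that the intermediate surfaces remain incompressible under $\partial$-compression --- is left implicit in the paper; your auxiliary lemma is true and is provable exactly by the innermost-circle/outermost-arc argument you sketch, so flagging and proving it only makes the argument more complete.
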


\begin{proof}
By Theorem 3.3$'$, there exists a $\partial$-compression hierarchy $$({\mathcal{H}^0},F_0,E_0)\to({\mathcal{H}^1},F_1,E_1)\to \cdots\to ({\mathcal{H}^k},F_k,\emptyset)\to \mathcal{H}^{k+1}$$
for $M$, where $S_0=F_k$ is a collection of pairwise disjoint essential disks in $M$. Set $S_i=F_{k-i}$, $1\leq i\leq k-1$, and denote the arc associated to $E_i$ by $\alpha_{k-i-1}$, $0\leq i\leq k-1$. The conclusion follows directly.
\end{proof}

\section{Compressible surfaces in a handlebody} \label{sec4}


Suppose that surface $F$ is compressible in a 3-manifold $M$, and no component of $F$ is a disk or 2-sphere. Then there exists a collection $\Sigma$ of pairwise disjoint non-parallel compression disks in $M$ such that if $\widehat{F}$ is the surface obtained by compressing $F$ along $\Sigma$ in $M$, then each component of $\widehat{F}$ is either an incompressible surface, or a 2-sphere in $M$. We call $\Sigma$ a {\em maximal system} of compression disks for $F$. If $\Sigma$ contains no 2-sphere components, $\Sigma$ is called a \textit{good maximal system} of compression disks for $F$.

\begin{thm} \label{tt3.6}
Assume that surface $F$ is compressible in a 3-manifold $M$. Then either $F$ is reducible, or there exists a good maximal system $\Sigma$ of compression disks for $F$, such that each component of $\widehat{F}$, the surface obtained by compressing $F$ along $\Sigma$ in $M$, is an incompressible surface with non-empty boundary. Moreover, if $F$ is non-separating in $M$, at least one component of $\widehat{F}$ is non-separating in $M$.
\end{thm}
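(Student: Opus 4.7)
Plan. If $F$ is reducible, the first alternative of the conclusion holds, so assume $F$ is irreducible. Pick a maximal system $\Sigma$ of compression disks for $F$ that minimises, in lexicographic order, the pair (number of $2$-sphere components of $\widehat F$, number of disks in $\Sigma$). Suppose for contradiction that $\widehat F$ contains a $2$-sphere component $S$.

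One may decompose
\[
S \;=\; \Bigl(\bigcup_{k} F_k'\Bigr) \cup \Bigl(\bigcup_{j=1}^{m} D_{i_j}^{\epsilon_j}\Bigr),
\]
where the $F_k'$ are pieces of $F\setminus N(\partial\Sigma,F)$ lying on $S$ and the $D_{i_j}^{\epsilon_j}$ are the disk scars produced by the compressions. Then $\chi(S)=\sum_k\chi(F_k')+m=2$, and essentiality of each $\partial D_i$ in $F$ rules out $m=1$ (it would force $\partial D_{i_1}$ to bound a disk in $F$), so $m\ge 2$. I split into two cases. \textbf{Case A:} some $D_i$ has both its scars $D_i^{\pm}$ on $S$. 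Removing $D_i$ from $\Sigma$ tubes $S$ to itself, turning $S$ into a component of Euler characteristic $0$; completing to a new maximal system contradicts minimality, once one verifies (using irreducibility of $F$) that no fresh $2$-sphere component arises in the completion. \textbf{Case B:} the disks $D_{i_j}$ underlying the scars on $S$ are pairwise distinct. Near each scar $D_{i_j}^{\epsilon_j}$, one side of $\widehat F$ is the ``outside of the arch'' continuing the side of $F$ opposite to $D_{i_j}$, while the other is the ``inside of the arch'' containing $D_{i_j}$. Since $S$ has a well-defined inside and outside in $M$, coherence of sides around the planar surface $S$ forces two scars whose underlying disks $D_{i_a}, D_{i_b}$ lie on opposite sides of $F$ and have boundaries cobounding a planar piece of $F\cap S$; isotoping these boundaries together across that piece produces compression disks on opposite sides of $F$ with a common boundary, namely a reducing pair for $F$, contradicting irreducibility.

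Hence $\widehat F$ has no $2$-sphere component, so maximality of $\Sigma$ forces every component of $\widehat F$ to be incompressible; in the handlebody context of the paper no closed incompressible surface exists, so each component has non-empty boundary. For the last assertion, compressing $F$ along a disk $D$ removes $N(\partial D,F)$ from $F$ and inserts $D^+\cup D^-$; since $N(\partial D,F)\cup D^+\cup D^-=\partial N(D)$ is a $\mathbb{Z}/2$-boundary, the class $[F]\in H_2(M,\partial M;\mathbb{Z}/2)$ is preserved under compression. Thus $F$ non-separating gives $[\widehat F]=[F]\ne 0$, so some component of $\widehat F$ represents a non-zero class and is non-separating in $M$. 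The main obstacle is the side-coherence argument of Case~B together with the verification in Case~A that completion to a maximal system does not reintroduce a $2$-sphere; handling these rigorously requires careful tracking of how the two sides of $\widehat F$ extend across the disk scars on $S$.
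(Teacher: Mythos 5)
Your reduction of the problem to eliminating $2$-sphere components of $\widehat F$, and your $\mathbb{Z}/2$-homology argument for the final ``moreover'' clause, are both fine (the latter is cleaner than the paper's one-line remark). But the case analysis you use to kill a sphere component $S$ has genuine gaps, and the split into your Cases A and B is the wrong dichotomy. In Case B, the ``side-coherence'' claim --- that a sphere $S$ built from a planar piece $P\subset F$ and scars of pairwise distinct disks must receive scars from both sides of $F$ --- is false. Take two compressing disks $D_1,D_2$ on the \emph{same} side of $F$ whose boundaries cobound an annulus $P$ in $F$: then $S=D_1^{\mathrm{scar}}\cup P\cup D_2^{\mathrm{scar}}$ is exactly such a sphere, all scars lie on one side, and $F$ need not be reducible (it may be incompressible from the other side). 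There is no violation of two-sidedness: the ``inside'' of $S$ consists of the two arch interiors (on the $D_i$-side of $F$) together with the region on the opposite side of $P$, and these glue up coherently around the corner circles. In Case A, deleting the disk $D_i$ whose two scars both lie on $S$ turns $S$ into a closed torus, and any completion of $\Sigma\setminus\{D_i\}$ to a maximal system must compress that torus; since compressing a closed torus always produces a sphere, the verification you defer (``no fresh $2$-sphere arises in the completion'') cannot succeed, and your lexicographic minimality is never contradicted.

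The paper splits by \emph{sides} rather than by scar multiplicity, and that is what makes the argument close. If all scars on $S$ come from one side of $F$, then some disk $D_i$ contributes exactly one of its two scars to $S$ (if every contributing disk contributed both, the planar piece together with the reattached annuli would be a closed component of $F$); after compressing along $\Sigma\setminus\{D_i\}$ the curve $\partial D_i$ bounds a disk in the resulting surface (namely $S$ minus the scar), so the $D_i$-compression is trivial and $\Sigma\setminus\{D_i\}$ is still a maximal system of strictly smaller cardinality, contradicting minimality of $|\Sigma|$. If $S$ carries scars from both sides, one chooses a simple closed curve $\alpha$ on the planar piece $P$ separating the scars of one side from those of the other; capping each half of $P$ with its scars and pushing off exhibits $\alpha$ as the boundary of compressing disks on both sides of $F$, so $F$ is reducible --- a contradiction. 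You would need to replace your Cases A and B by this dichotomy, or else supply a genuinely new argument for the one-sided, distinct-disks configuration, which your current proof does not cover.
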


\begin{proof}
By assumption, $F$ is compressible in $M$. Choose a maximal system $\Sigma$ of compression disks for $F$ such that cardinality $|\Sigma|$ is minimal over all maximal systems of compression disks for $F$. Then each component of $\widehat{F}$ is either an incompressible surface with non-empty boundary, or a 2-sphere in $M$. In fact, if a component $S$ of $\widehat{F}$ is closed with positive genus, since $M$ is a handlebody, $S$ is compressible in $M$, contradicting to the assumption that $|\Sigma|$ is minimal.

If $F$ is reducible, the conclusion holds. In the following, we assume that $F$ is irreducible.

Denote by $\mathcal{D}=\{D_1,\ldots,D_m\}$ the set of disks in $\Sigma$ which are lying in one side of $F$, and $\mathcal{E} =\{E_1,\ldots,E_n\}$ the set of disks in $\Sigma$ which are lying in the other side of $F$. Let $S$ be a 2-sphere component of $\widehat{F}$. $S$ is obtained by capping a planar sub-surface $P$ of $F$ with some cutting sections of $\Sigma$. We split our discussion in two cases.

{\bf Case 1}. Suppose $\partial P$ consists of the cutting sections of the disks only in $\mathcal{D}$ (or $\mathcal{E}$). Assume that the case of $\mathcal D$ happens. Denote by $D_i'$ and $D_i''$ the two cutting sections of $D_i$ in $\mathcal{D}$. There must be some component, say $\partial D_i'$, of $\partial P$, such that $\partial D_i''$ is not a component of $\partial P$. Otherwise, $F$ is closed, contradicting to the assumption. It is easy to see that $\Sigma'=\Sigma\setminus \{D_i\}$ is still a  maximal system of compression disks for $F$ with $|\Sigma'|=|\Sigma|-1$, contradicting to the minimality of $|\Sigma|$. The case of $\mathcal E$ can be considered analogously.

{\bf Case 2}. Suppose $\partial P$ consists of some cutting sections of the disks in both $\mathcal{D}$ and $\mathcal{E}$. Let $\alpha$ be a simple closed curve on $P$ such that $\alpha$ cuts $P$ into two planar surfaces $P_1$ and $P_2$ such that $\partial P_1$ consists of $\alpha$ and some cutting sections of the disks in $\mathcal{D}$ and $\partial P_2$ consists of $\alpha$ and some cutting sections of the disks in $\mathcal{E}$, see Fig.~\ref{fig8}.
\begin{figure}[!htb]
	\centering
	\includegraphics[width=0.8\textwidth]{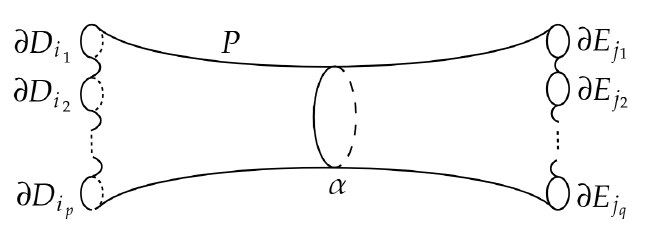}
    \caption{A simple closed curve $\alpha$ in $P$.}
    \label{fig8}
\end{figure}
Thus $\alpha$ is essential in $F$ and bounds disks in both sides of $F$, hence $F$ is reducible in $M$, contradicting to the assumption.
Therefore, $\Sigma=\mathcal{D}\cup\mathcal{E}$ is a good maximal system $\Sigma$ of compression disks for $F$.

If each component of $\widehat{F}$ is non-separating in $M$, then $F$ is separating in $M$. Thus, if $F$ is non-separating in $M$, at least one component of $\widehat{F}$ is non-separating in $M$.
This completes the proof.
\end{proof}

\begin{cor} \label{cc3.7}
Suppose that $M$ is a handlebody, and $F$ is compressible in $M$. Then either $F$ is stabilized, or there exists a good maximal system $\Sigma$ of compression disks for $F$, such that each component of $\widehat{F}$, the surface obtained by compressing $F$ along $\Sigma$ in $M$, is an incompressible surface with non-empty boundary. Moreover, if $F$ is non-separating in $M$, at least one component of $\widehat{F}$ is non-separating in $M$.
\end{cor}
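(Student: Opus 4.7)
The plan is to derive Corollary~\ref{cc3.7} as a direct specialization of Theorem~\ref{tt3.6}, upgraded in one branch via Proposition~\ref{pp3.1}. The only input beyond those two results is the standard fact that every handlebody is irreducible, i.e.\ every embedded 2-sphere in $M$ bounds a 3-ball. This is immediate because a handlebody embeds in $S^{3}$, and its complement in $S^{3}$ is again a handlebody, so reducing spheres cannot exist.

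With this in hand, I would simply apply Theorem~\ref{tt3.6} to the compressible surface $F$ in $M$. That theorem hands us a dichotomy: either $F$ is reducible, or there exists a good maximal system $\Sigma$ of compression disks for $F$ such that every component of $\widehat{F}$ is incompressible with non-empty boundary, together with the non-separating component clause when $F$ is non-separating.

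In the second branch there is nothing further to prove, since the conclusion coincides verbatim with that of the corollary. In the first branch, I would invoke Proposition~\ref{pp3.1}: since $M$ is irreducible, a reducible $F$ is automatically stabilized. This converts the ``reducible'' alternative of Theorem~\ref{tt3.6} into the ``stabilized'' alternative of Corollary~\ref{cc3.7}. The ``moreover'' about a non-separating component of $\widehat{F}$ is inherited unchanged from Theorem~\ref{tt3.6}.

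I do not foresee any genuine obstacle: the corollary is essentially the handlebody specialization of Theorem~\ref{tt3.6}, refined by Proposition~\ref{pp3.1}. The only point one should be mildly careful about is making sure the irreducibility of $M$ is explicitly cited when passing from ``reducible'' to ``stabilized'', since Proposition~\ref{pp3.1} assumes irreducibility as a hypothesis.
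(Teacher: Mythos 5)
Your proposal is correct and follows essentially the same route as the paper: the authors likewise note that the handlebody $M$ is irreducible, invoke Proposition~\ref{pp3.1} to upgrade ``reducible'' to ``stabilized'', and otherwise appeal to Theorem~\ref{tt3.6} (which the paper's text miscites as Theorem~\ref{tt3.3}). Your explicit justification that a handlebody is irreducible is a small, harmless addition the paper leaves implicit.
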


\begin{proof}
Note that $M$ is irreducible. If $F$ is reducible, then $F$ is stabilized by Proposition~\ref{pp3.1}. Otherwise the conclusion follows from Theorem~\ref{tt3.3}.
\end{proof}

Let $M$ and $F$ be as in the Corollary~\ref{cc3.6}, and assume that $F$ is irreducible in $M$. By Corollary \ref{cc3.6}, there exists a maximal system $\Sigma$ of compression disks for $F$, such that each component of $\widehat{F}$, the surface obtained by compressing $F$ along $\Sigma$ in $M$, is an incompressible surface with non-empty boundary. Note that for $\Sigma=\mathcal{D}\cup\mathcal{E}$, one of $\mathcal{D}$ and $\mathcal{E}$ may be empty. If both $\mathcal{D}$ and $\mathcal{E}$ are non-empty, then $F$ is weakly reducible. Using above notations, we have the following result.

\begin{cor} \label{cc3.8}
Suppose that $M$ is a handlebody, and $F$ is compressible in $M$. Then either $F$ is stabilized, or there exists a good maximal system $\Sigma=\mathcal{D}\cup\mathcal{E}$ of compression disks for $F$, and there exist a division $\{\mathcal{D}_1, \cdots, \mathcal{D}_r\}$ and a division $\{\mathcal{E}_1, \ldots, \mathcal{E}_s\}$, such that $\mathcal{D}_i$ and $\mathcal{E}_j$ are quasi-primitive in $M$, $1\leq i\leq r$, $1\leq j\leq s$.
\end{cor}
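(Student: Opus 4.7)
The plan is to first dispose of the reducible case by Proposition~\ref{pp3.1}, then to extract the desired partitions from the good maximal system provided by Corollary~\ref{cc3.7} by grouping the compression disks according to the handlebody components of $M\setminus\widehat{F}$. Since $M$ is a handlebody it is irreducible, so if $F$ were reducible then Proposition~\ref{pp3.1} would imply $F$ is stabilized, yielding the first alternative. We may therefore assume $F$ is irreducible.

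Applying Corollary~\ref{cc3.7}, I obtain a good maximal system $\Sigma = \mathcal{D}\cup\mathcal{E}$ of compression disks for $F$, with $\mathcal{D}$ and $\mathcal{E}$ lying on opposite sides of $F$, such that the surface $\widehat{F}$ obtained by compressing $F$ along $\Sigma$ is incompressible with non-empty boundary. Corollary~\ref{cc3.4} (Schultens) then ensures that $M\setminus\widehat{F}$ is a disjoint union of handlebodies $H_1,\ldots,H_N$.

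The next step is to observe that each $D\in\mathcal{D}$ (and likewise each disk in $\mathcal{E}$) can be canonically identified with the co-core of a 1-handle (tube) attached within a single component $H_k$ when one reconstructs $F$ from $\widehat{F}$; after a small isotopy, $D\subset H_k$ with $\partial D$ a simple closed curve on $\widehat{F}\cap \partial H_k$. Partitioning $\mathcal{D}$ according to which component contains each disk then yields $\mathcal{D}=\mathcal{D}_1\sqcup\cdots\sqcup\mathcal{D}_r$, where $\mathcal{D}_i$ collects the disks lying in a fixed $H_{k_i}$, and similarly $\mathcal{E}=\mathcal{E}_1\sqcup\cdots\sqcup\mathcal{E}_s$. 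The disks in each $\mathcal{D}_i$ are pairwise disjoint essential disks of $H_{k_i}$, so attaching 2-handles to $H_{k_i}$ along $\partial\mathcal{D}_i$ should undo the corresponding 1-handle attachments and produce a handlebody of genus $g(H_{k_i})-|\mathcal{D}_i|$; by Definition~\ref{d2.8} this makes $\mathcal{D}_i$ quasi-primitive, and the identical argument treats the $\mathcal{E}_j$.

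The main obstacle I expect is the rigorous verification of quasi-primitivity in the last step: one must show that 2-handle attachment along $\partial \mathcal{D}_i$ produces a handlebody of the correct genus rather than some other 3-manifold. This is where the hypothesis that $\Sigma$ is a \emph{good} maximal system becomes essential, since it excludes 2-sphere components from $\widehat{F}$ and guarantees that each tube contributing to $\mathcal{D}_i$ corresponds to a genuine genus-increasing 1-handle inside $H_{k_i}$. Proposition~\ref{pp2.6} supplies the base case for a single disk, and Lemma~\ref{lem14} together with an induction on $|\mathcal{D}_i|$ should promote this to the full collection by treating the tubes one at a time and peeling off handlebody factors along a longitude-meridian pair at each stage.
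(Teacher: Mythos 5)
Your overall strategy matches the paper's: dispose of the reducible case with Proposition~\ref{pp3.1}, invoke Corollary~\ref{cc3.7} to obtain a good maximal system $\Sigma=\mathcal{D}\cup\mathcal{E}$ with $\widehat F$ incompressible, use Corollary~\ref{cc3.4} to see that the components of $M\setminus\widehat F$ are handlebodies, and partition the disks according to which component contains the corresponding tube. Up to that point you agree with the paper's own argument.

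The gap is in your verification of quasi-primitivity. You attach the $2$-handles along $\partial\mathcal{D}_i$ to $H_{k_i}$ itself, i.e.\ to the component of $M\setminus\widehat F$ that \emph{contains} the disks of $\mathcal{D}_i$ as properly embedded co-cores. But each curve $\partial D$, $D\in\mathcal{D}_i$, bounds the properly embedded disk $D$ in $H_{k_i}$ (after your isotopy it even bounds a disk in $\partial H_{k_i}$), so the core of the attached $2$-handle together with $D$ forms an essential $2$-sphere: $H_{k_i}(\partial\mathcal{D}_i)$ is reducible and is never a handlebody, let alone one of genus $g(H_{k_i})-|\mathcal{D}_i|$. (Concretely, a solid torus plus a $2$-handle along its meridian is $S^2\times S^1$ minus a ball, consistent with Proposition~\ref{pp2.6} since the meridian is not a longitude.) Attaching a $2$-handle along $\partial D$ is not the inverse of the $1$-handle attachment; \emph{cutting} along $D$ is. The correct receptacle for the $2$-handles is $Y_i$, the handlebody obtained by cutting $H_{k_i}$ along the co-cores $\mathcal{D}_i$ --- equivalently, the corresponding component of $M\setminus F$ cut along the compression disks it contains. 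On $\partial Y_i$ the curves $\partial\mathcal{D}_i$ no longer bound disks, and $Y_i(\partial\mathcal{D}_i)=H_{k_i}$ is a handlebody by Corollary~\ref{cc3.4}, so Definition~\ref{d2.8}(2) gives quasi-primitivity immediately. In particular no induction is needed, and the appeal to Lemma~\ref{lem14} (which concerns primitivity via dual disks meeting each curve once --- data you do not have) and to Proposition~\ref{pp2.6} is off target. You should also make explicit why each group of attaching curves meets only one of $\partial\mathcal{D}$, $\partial\mathcal{E}$: a tube lands on the side of $\widehat F$ opposite to the side of $F$ containing its compression disk, which is what makes the two divisions of $\mathcal{D}$ and of $\mathcal{E}$ separately well defined.
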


\begin{proof}
Again note that $M$ is irreducible. As well as before, if $F$ is reducible, then $F$ is stabilized by Proposition~\ref{pp3.1}. In the following, assume that $F$ is irreducible. It follows from Corollary~\ref{cc3.6} that there exists a good maximal system $\Sigma=\mathcal{D}\cup\mathcal{E}$ of compression disks for $F$, such that each component of $\widehat{F}$, the surface obtained by compressing $F$ along $\Sigma$ in $M$, is an incompressible surface with non-empty boundary.

Denote $\widehat{F}=\{F_1,\cdots,F_q\}$. Then $\widehat{F}$ cuts $M$ into a collection $\widehat{\mathcal{H}}$ of 3-manifolds, say $\widehat{\mathcal{H}}=\{X_1,\cdots,X_p\}$, and $M$ is an amalgamation of $\widehat{\mathcal{H}}$ along $\widehat{F}$. By Corollary~\ref{cc3.4}, each $X_i$ is a handlebody, $1\leq i\leq p$. A good maximal system $\Sigma$ cuts $M$ into a collection of handlebodies $\{Y_1,\ldots,Y_p\}$, and each $X_i$ is obtained by adding some 2-handles along a subfamily $\mathcal{J}_i$ (maybe empty) of some curves in $\partial \Sigma$ to, say $Y_i$, $1\leq i\leq p$. Note that the disks in $\mathcal{D}$ and the disks in $\mathcal{E}$ are lying in the different sides of $F$, hence $\mathcal{J}_i$ cannot contains the curves in both $\partial\mathcal{D}$ and $\partial\mathcal{E}$. Assume that $\emptyset\ne\mathcal{J}_i\subset \partial\mathcal{D}$ or $\partial\mathcal{E}$. Since $X_i$ is a handlebody, it follows that $\mathcal{J}_i$ is quasi-primitive in $Y_i$ (therefore in $M$). It is clear that $\mathcal{J}_i\cap \mathcal{J}_j=\emptyset$ for $i\ne j$. The conclusion the follows.
\end{proof}

\section{Amalgamation along an annulus} \label{sec5}

Let $M$ be a handlebody. Assume that $M$ is an amalgamation of a collection $\mathcal{H}$ of handlebodies along $F'$ and $F''$, where $F=F'=F''$ is a compact connected orientable surface in $M$, and $\mathcal{H}=\{H_1,H_2\}$ or $\{H\}$, depending on that $F$ is separating or non-separating in $M$. Suppose that $F$ is irreducible in $M$. It follows from Corollary \ref{cc3.6} that there exists a good maximal system $\Sigma$ of compression disks for $F$, such that each component of $\widehat{F}$, the surface obtained by compressing $F$ along $\Sigma$ in $M$, is an incompressible surface with non-empty boundary. Suppose that $\widehat{F}=\{F_1,\ldots,F_q\}$, and $\mathcal{H}\setminus   \Sigma=\widehat{\mathcal{H}}=\{X_1,\ldots,X_p\}$.  Then $\widehat{\mathcal{H}}$ is a collection of handlebodies, and $M$ is an amalgamation of $\widehat{\mathcal{H}}$ along $\widehat{F}$. If $F$ is non-separating in $M$, then at least one component of $\widehat{F}$ is non-separating in $M$.

Denote by $X\#_\partial Y$ the boundary connected sum of 3-manifolds $X$ and $Y$. It is clear that $X\#_\partial Y$ is a handlebody if and only if both $X$ and $Y$ are handlebodies. For an $H'$-splitting $X\cup_A Y$ for a 3-manifold $M$, where $A$ is an annulus, it is known~\cite{lei1994some} that $M$ is a handlebody if and only if the core curve of $A$ is a longitude of either $X$ or $Y$. In particular, $H(J)$ is a handlebody if and only if $J$ is a longitude of $H$ (or equivalently, $J$ is primitive in $H$).

A characteristic for an amalgamation of two handlebodies along connected incompressible surfaces to be a handlebody is given in Theorem~\ref{t2.9}. We will show below that a similar result holds for a self-amalgamation of a handlebody along incompressible surfaces. Firstly we consider the case when the amalgamated surface is an annulus.

Let $H$ be a handlebody of positive genus, and $A$ an annulus on $\partial H$. We say that $A$ is {\em separating} if the core curve of $A$ is separating in $\partial H$. Otherwise, $A$ is {\em non-separating}. Let $M$ be a self-amalgamation of $H$ along annuli $A'$ and $A''$. It is clear that $\partial H$ has a single component if and only if at least one of $A'$ and $A''$ is non-separating.
Let $\mathcal{H}$ be a collection of handlebodies. If $\ell$ is a longitude of some handlebody in $\mathcal{H}$, we also say that $\ell$ is a longitude of $\mathcal{H}$.

In the following, we assume $\mathcal{H}=\{H_1,H_2\}$ or $\{H\}$, and $M=\mathcal{H}_F$ is an amalgamation of $\mathcal{H}$ along $F_1$ and $F_2$ (where $F=F_1=F_2$ is $M$).

Theorem~\ref{tt3.9} characterizes a manifold $M$ to be a handlebody when $F$ is an incompressible annulus~$A$.

\begin{thm}\label{tt3.9}
(1) For $\mathcal{H}=\{H_1,H_2\}$, denote by $\mathcal{H}_A$ an amalgamation along an annulus $A$. Then $\mathcal{H}_A$ is a handlebody if and only if the core curve $\ell_i$ of $A_i$ is a longitude of $H_i$, for $i=1,2$.

(2) For $\mathcal{H}=\{H\}$ and $M=\mathcal{H}_A$, let $\ell_i$ be a core curve of $A_i$, $i=1,2$. Then $M$ is a handlebody if and only if some $\ell_i$ is a longitude for $H$, and there exists a LM-pair $(\ell_i,m_i)$ such that $\ell_j$ is disjoint from the associated disk $D_{(\ell_i,m_i)}$, for $(i,j)=(1,2)$ or $(2,1)$.
\end{thm}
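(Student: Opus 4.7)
The plan is to follow the strategy behind Theorem~\ref{t2.9}: sufficiency by collapsing a solid torus via Proposition~\ref{pp2.7} to reduce the amalgamation to a boundary connected sum, and necessity by extracting a $\partial$-compressing disk for $A$ in $M$ whose boundary identifies the core of $A$ as a longitude. The LM-pair machinery and Proposition~\ref{pp2.3} serve as the bridge between the two directions.

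For the sufficiency of part~(1), say $\ell_1$ is a longitude of $H_1$, with LM-pair $(\ell_1,m_1)$ and associated disk $D_1$ cutting $H_1$ into a solid torus $T_1$ (which carries $A$ as a longitudinal annulus) and a handlebody $H_1^{*}$. Re-associating
\[
M \;=\; H_1\cup_A H_2 \;=\; H_1^{*}\cup_{D_1}(T_1\cup_A H_2),
\]
Proposition~\ref{pp2.7} identifies $T_1\cup_A H_2\cong H_2$, so $M\cong H_1^{*}\#_\partial H_2$, a handlebody. The sufficiency for part~(2) follows the same template: split $H=T_i\cup_{D_i}H^{*}$ via the LM-pair $(\ell_i,m_i)$; the hypothesis $\ell_j\cap D_{(\ell_i,m_i)}=\emptyset$ forces $A_j$ onto one side of $D_i$, say into $\partial H^{*}$; Proposition~\ref{pp2.7} absorbs $T_i$ into $H^{*}$ along the self-amalgamating annulus, and the remaining $D_i$-gluing becomes a self-identification of two disjoint disks on the boundary of the resulting handlebody, i.e.\ a $1$-handle attachment, hence $M$ is a handlebody.

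For necessity, assume $M$ is a handlebody. Because $A$ is incompressible and is not a disk, Lemma~\ref{lem2.5}(2) provides a $\partial$-compressing disk $\Delta$ for $A$ in $M$; by definition $\Delta\cap A=\gamma$ is a single essential (hence spanning) arc and $\partial\Delta=\gamma\cup\beta$ with $\beta\subset\partial M$. In part~(1), $\partial M=(\partial H_1\setminus A)\cup(\partial H_2\setminus A)$, so $\Delta$ lies in one $H_i$; in part~(2), $\partial M=\partial H\setminus(A_1\cup A_2)$, so $\Delta\subset H$ sits on one side of $A$, say the $A_1$-side, and moreover $\beta\cap A_2=\emptyset$. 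In either case $\partial\Delta$ is a simple closed curve on the ambient boundary meeting the core $\ell$ transversely in exactly one point, so $\partial\Delta$ is essential and $\Delta$ is an essential disk; thus $\ell$ is a longitude of the corresponding $H_i$ (respectively of $H$) with meridian $\partial\Delta$. In part~(2), the disjointness $\beta\cap\ell_2=\emptyset$ allows me to choose the regular neighborhood of $\ell_1\cup\partial\Delta$ in $\partial H$ small enough to avoid $\ell_2$, giving $\ell_2\cap D_{(\ell_1,\partial\Delta)}=\emptyset$.

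The main obstacle I anticipate is controlling the intersection pattern $\Delta\cap A$. Lemma~\ref{lem2.5}(2) only guarantees \emph{some} $\partial$-compressing disk, so I would first minimize $|\Delta\cap A|$ under isotopy, using incompressibility of $A$ together with the irreducibility of the handlebody $M$ to cap off closed-curve components of $\Delta\cap A$ (exactly as in the proof of Theorem~\ref{tt3.3}), and then extract an outermost arc subdisk to arrange $\Delta\cap A$ to be a single essential arc. A subsidiary care-point in the sufficiency of part~(2) is to verify that after Proposition~\ref{pp2.7} absorbs $T_i$, the two images of $D_i$ are disjoint disks on the boundary of the same handlebody, so that identifying them is indeed the standard $1$-handle attachment keeping $M$ in the handlebody class.
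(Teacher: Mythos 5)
Your proposal is correct and follows essentially the same route as the paper's proof: sufficiency by cutting along the associated disk $D_{(\ell_i,m_i)}$, absorbing the resulting solid torus with Proposition~\ref{pp2.7}, and recovering $M$ by regluing the two copies of that disk (a boundary connected sum in case~(1), a $1$-handle attachment in case~(2)); necessity by producing a $\partial$-compressing disk for $A$ which, viewed in the relevant handlebody, is a properly embedded essential disk meeting the core $\ell_i$ exactly once and missing $\ell_j$. The only substantive differences are that you obtain that disk directly from Lemma~\ref{lem2.5}, whereas the paper reconstructs it as an outermost subdisk of an essential disk of $M$ chosen to minimize $|A\cap D|$, and that the paper first disposes of the case where $A$ is compressible in $M$ via Proposition~\ref{pp2.6} --- a case you may legitimately omit under the standing incompressibility assumption (your worry about cleaning up $\Delta\cap A$ is also moot, since by definition a $\partial$-compressing disk meets $A$ in a single essential arc). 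One remark on part~(1): your argument proves the version quoted earlier in Section~\ref{sec5} from~\cite{lei1994some} (the core of $A$ is a longitude of $H_1$ \emph{or} $H_2$), which is the correct statement; read as ``both,'' the necessity direction fails already when $H_1$ is a solid torus with $A_1$ longitudinal and $\ell_2$ non-primitive in $H_2$.
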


\begin{proof}
We only prove (2). The proof of (1) is similar.

\textbf{Necessity.} Assume that $M$ is a handlebody. If $A$ is compressible in $M$, then one of $A_1$ and $A_2$, say $A_2$, is compressible in $H$. Let $D$ be a compression disk of $A_2$ in $H$, and $N=D\times I$ a regular neighbourhood of $D$ in $H$ with $(\partial D)\times I=A_2$. Set $H'=\overline{H-N}$, $D_0=D\times \{ 0 \}$, and $D_1=D \times \{ 1\}$. Then $M=(H'\cup_A N)_{D_0\cup D_1}$. It is clear that $M$ is a handlebody implies that $H'\cup_A N$ is a handlebody. By Proposition~\ref{pp2.6}, $\ell_1$ is a longitude of $H'$ (therefore, $H$) with respect to a meridian $m_1$ in $H'$ (in $H$), and $\ell_2=\partial D$ is disjoint from the associated disk $D_{(\ell_1,m_1)}$, see Fig.~\ref{fig9}.
\begin{figure}[!htb]
	\centering
	\includegraphics[width=0.8\textwidth]{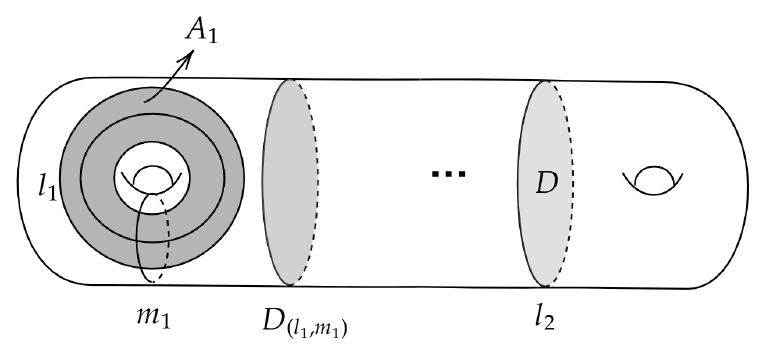}
    \caption{$\ell_2= \partial D$ is disjoint from the associated disk $D_{(\ell_1,m_1)}$.}
    \label{fig9}
\end{figure}

In the following, we assume that $A$ is incompressible in $M$. If $A$ is disjoint from an essential disk $E$ of $M$, then $M\setminus E$ has a handlebody component $H''$ with $A\subset H''$ and $g(H'')<g(M)$. Without loss of generality, we can assume that $A$ intersects all essential disks of $M$ non-empty. Choose an essential disk $D$ of $M$, such that $|A\cap D|$ is minimal over all essential disks in $M$.

\textbf{Claim~1.} \textit{$A\cap D$ has no circle component.}

Otherwise, let $\gamma$ be a circle component of $A\cap D$ which is innermost in $D$, i.e., $\gamma$ bounds a sub-disk $D'$ of $D$ with $A\cap\text{int}(D')=\emptyset$. Since $A$ is incompressible in $M$, $\gamma$ bounds a disk $D''$ in $A$. Since $M$ is irreducible, so $D'\cup_\gamma D''$ bounds a 3-ball which can be used to isotope $D$ in $M$ to reduce $|A\cap D|$ what contradict to the minimality of $|A\cap D|$.

Thus, each component $A\cap D$ is a simple arc properly embedded in $A$.

\textbf{Claim~2.} \textit{Each component $A\cap D$ is an essential arc in $A$.}

Otherwise, let $\delta$ be an arc component of $A\cap D$ which is outermost in $A$, i.e., $\delta$ cuts out of a sub-disk $\Delta$ from $A$ with $D\cap\text{int}(\Delta)=\emptyset$. $\delta$ cuts $D$ into two sub-disks $D_1$ and $D_2$. Denote $D'=D_1\cup \Delta$, and $D''=D_2\cup \Delta$. The at least one of $D'$ and $D''$, say $D'$, is an essential disk in $M$, and after an isotopy of $D$, we have $|A\cap D'|<|A\cap D|$, again contradicting to the minimality of $|A\cap D|$.

Now choose a component $\delta'$ of $A\cap D$ which is outermost in $D$, $\delta'$ cuts out of a sub-disk $\Delta'$ from $D$ with $A\cap\text{int}(\Delta')=\emptyset$. Then $\Delta'$ is a disk properly embedded in $H$. Say, $|\ell_1\cap \Delta'|=1$, and set $m_1=\partial \Delta'$. Then $\ell_1$ is a longitude for $H$ with respect to the meridian $m_1$, and $\ell_2\cap (\ell_1\cup m_1)=\emptyset$, so $\ell_2$ is disjoint from the associated disk $D_{(l_1,m_1)}$, see Fig.~\ref{fig10}.
\begin{figure}[!htb]
	\centering
	\includegraphics[width=0.8\textwidth]{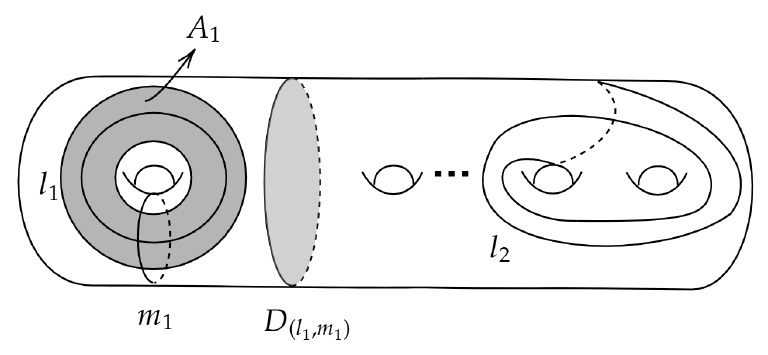}
	\caption{$l_2$ is disjoint from the associated disk $D_{(l_1,m_1)}$.} \label{fig10}
\end{figure}

\textbf{Sufficiency}. Assume that the core curve $\ell_1$ of $A_1$ is a longitude for $H_1$ with respect to a meridian curve $m_1$, and the core curve $\ell_2$ of $A_2$ is disjoint from the associated disk $D=D_{(\ell_1,m_1)}$. $D$ cuts $H$ into a solid torus $T$ and a handlebody $H'$ of genus $g(H_1)-1$, such that $(\ell_1,m_1)$ is a LM-pair for $T$. By Proposition~\ref{pp2.7}, $H'(f)=H'\cup_{A_1=A_2}T$ is a handlebody of genus $g(H')$. Note that $M$ is obtained from $H'$ by gluing the two cutting sections of $D$ together, so $M$ is a handlebody of genus $g(H')+1$ ($=g(H)$).
\end{proof}

\section{Proof of Theorem~\ref{thm1.1}} \label{sec6}




In the view of Theorem \ref{thm1.1}, we write $\mathcal{J}=\{J_1,\ldots,J_p\},\ \mathcal{D}=\{D_1,\ldots,D_p\}$, and call the pair $(\mathcal{J},\mathcal{D})$ a \textit{good JD-pair} for $(\mathcal{H},F)$. The proof of Theorem~\ref{thm1.1} is as following.

\begin{proof}
Let $F = S_{g,b}$ be the compact connected orientable surface of genus $g$ with $b$ boundary components. 
Recall that complexity of $S_{g,b}$ is defined by formula~(\ref{eqn3.1}).
It is clear that for $b\geq 1$, $C(S_{g,b})=0$ if and only if $(g,b) = (0,1)$ i.e., $F$ is a disk.  We will prove the conclusion by induction on $C(F)$. When $F$ is an annulus $A$, the conclusion follows from Theorem~\ref{tt3.9}(1) if $A$ is separating in $M$ and from Theorem~\ref{tt3.9}(2) if $A$ is non-separating in $M$. To prove by induction we assume that the conclusion is true for all compact connected orientable incompressible surfaces $F$ with $C(F)\leq k$, where $k\geq 1$.

\textbf{Necessity.} Let the handlebody $M$ be an amalgamation of $\mathcal{H}$ along $F_1$ and $F_2$ (where $F=F_1=F_2$ is incompressible in $M$) with $C(F)=k+1$.

By Theorem~\ref{tt3.3}, $F$ is $\partial$-compressible in $M$, so there exists a $\partial$-compression disk $\Delta$ in $M$, $\Delta\cap F=\gamma$ is an essential arc in $F$. Let $F'$ be the surface, and $\mathcal{H}'$ the collection of handlebodies, obtained by doing a $\partial$-compression on $F$ in $M$ along $\Delta$. Remark that $F'$ is still incompressible in $M$.

If $\gamma$ is non-separating in $F$, then $F'$ is connected. Set $\partial\Delta=\beta$. Let $\alpha$ be a simple closed curve on $F$ such that $|\alpha\cap\Delta|=1$. Then $(\alpha,\beta)$ is a LM-pair. Clearly, $C(F')=k$. By induction, there exists a JD-pair $(\mathcal{J}',\mathcal{D}')$ for $(\mathcal{H}',F')$. Set $\mathcal{J}=(\mathcal{J}',\alpha)$ and $\mathcal{D}=(\mathcal{D}',\Delta)$. It is obvious that $(\mathcal{J},\mathcal{D})$ is a JD-pair for $(\mathcal{H},F)$, the conclusion holds.

If $\gamma$ is separating in $F$, then $F'$ has two components $F_1'$ and $F_2'$. Since neither $F_1'$ nor $F_2'$ is a disk, so $C(F_1') \leq k$ and $C(F_2')\leq k$. By Theorem~\ref{tt3.3}, we may assume that $F'$ cuts $\mathcal{H}$ to a collection $\mathcal{H}'$ of handlebodies. We discuss it in the following four cases:
\begin{itemize}
\item[(1)] $F'_1$ cuts $M$ into two handlebodies $X$ and $Y'$, and $F'_2$ cuts $Y'$ into two handlebodies $Y$ and $Z$. Again by Theorem~\ref{tt3.3}, both $X\cup_{F'_1}Y$ and $Y\cup_{F'_2}Z$ are handlebodies. Set $\mathcal{V}_1=\{X,Y\}$, $\mathcal{V}_2=\{Y,Z\}$. By induction, there exists a JD-pair $(\mathcal{J}_1,\mathcal{D}_1)$ for $(X\cup Y,F_1')$ and a JD-pair $(\mathcal{J}_2,\mathcal{D}_2)$ for $(Y\cup Z,F_2')$. Set $\mathcal{J}=(\mathcal{J}_1,\mathcal{J}_2)$ and $\mathcal{D}=(\mathcal{D}_1,\mathcal{D}_2)$. It is clear that $(\mathcal{J},\mathcal{D})$ is a JD-pair for $(\mathcal{H},F)$, and $p=\operatorname{rank}(\pi_1(F))$, the conclusion holds.
\item[(2)] One of $F_1'$ and $F_2'$ is separating in $M$, and the other is non-separating in $M$, say, $F_1'$ is cutting $M$ into $X$ and $Y$, and $F_2'$ is non-separating in $Y$ (equivalently, in~$M$).
\item[(3)] Each of $F_1'$ and $F_2'$ is non-separating in $M$, but $\{F_1',F_2'\}$ is separating in $M$.
\item[(4)] Each of $F_1'$, $F_2'$ and $\{F_1',F_2'\}$ is non-separating in $M$.
\end{itemize}
The proofs of (2), (3) and (4) are all similar to that of (1).

\textbf{Sufficiency}. By the assumption, there exists a good JD-pair $(\mathcal{J},\mathcal{D})$ for $(\mathcal{H},F)$, where $\mathcal{J}=(J_1,\ldots,J_p)$, $\mathcal{D}=(D_1,\ldots,D_p)$, and $p=\operatorname{rank}(\pi_1(F))$. Note that $(\mathcal{J},\mathcal{D})$ is a good JD-pair for $(\mathcal{H},F)$ implies that $D_p\cap F$ consists of only a single arc, say $\gamma$. Thus $D_p$ is a $\partial$-compression disk for $F$ in $M$. Let $F'$ ($\mathcal{H}'$) be the surface (the collection of handlebodies) obtained by doing a $\partial$-compression on $F$ in $M$ along $D_p$. Then $C(F')=k$ and $M=\mathcal{H}_F\cong \mathcal{H}'_{F'}$. If $\gamma$ is non-separating in $F$, set $\mathcal{J}'=(J_1,\ldots,J_{p-1}),\ \mathcal{D}'=(D_1,\ldots,D_{p-1})$, it is clear that $(\mathcal{J}',\mathcal{D}')$ is a good JD-pair for $(\mathcal{H}',F')$. By the induction assumption, $M$ is a handlebody. If $\gamma$ is separating in $F$, then $F'$ has two components $F_1'$ and $F_2'$. Again we can split it into four cases as above and use similar arguments to show that $M$ is a handlebody.
This completes the proof.
\end{proof}

\section{Existence of maximal system of compression discs} \label{sec7}

\begin{thm}\label{tt3.11}
Let $M$ be an irreducible 3-manifold with reducible boundary, and $F$ a non-trivial compressible surface in $M$ with non-empty boundary. Suppose that $E$ is a compression disk of $\partial M$ in $M$ such that $E\cap F$ has no circle components. Then there exists a maximal system $\mathcal{D}$ of compression disks for $F$, such that $E\cap \mathcal{D}=\emptyset$.
\end{thm}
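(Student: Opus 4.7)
The plan is to first prove the following single-disk version of the claim: under the hypotheses, $F$ admits a compression disk $D$ in $M$ with $D\cap E = \emptyset$. Then the theorem follows by induction: having produced such a $D_1$, compress $F$ along $D_1$ to obtain $F_1$; since $D_1 \cap E = \emptyset$, the two parallel copies of $D_1$ appearing in the surgered surface remain disjoint from $E$, and therefore $E \cap F_1 \subset E \cap F$ still has no circle components. Apply the single-disk claim to each compressible non-sphere component of $F_1$ and continue until the surgered surface has only incompressible and sphere components, yielding a maximal system $\mathcal{D} = \{D_1, D_2, \ldots\}$ with $\mathcal{D} \cap E = \emptyset$.

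For the single-disk claim, pick a compression disk $D$ of $F$ in general position with $E$ minimizing $|D \cap E|$, and suppose for contradiction the minimum is positive. Circle components of $D \cap E$ are removed by the standard innermost-disk argument using the irreducibility of $M$: if $c$ is innermost on $E$, bounding $E_0 \subset E$ with $E_0 \cap D = c$, then $c$ bounds a subdisk $D_0 \subset D$, the 2-sphere $E_0 \cup D_0$ bounds a 3-ball, and replacing $D_0$ by a slight push-off of $E_0$ yields a compression disk $D'$ with $\partial D' = \partial D$ and strictly fewer intersections with $E$, a contradiction. So $D \cap E$ consists only of arcs. Now the hypothesis that $E \cap F$ has no circle components enters: $E - F$ is a union of disk regions $\{R_\lambda\}$, each lying on one side of $F$, and every arc of $D \cap E$ lies interior to some $R_\lambda$ because $\partial D \subset F$. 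Choose an outermost arc $\alpha$ of $D \cap R_\lambda$, cutting off a subdisk $E_0 \subset R_\lambda$ with $E_0 \cap D = \alpha$ and $\partial E_0 = \alpha \cup \beta$, $\beta$ a sub-arc of $\partial R_\lambda$; the endpoints of $\alpha$ lie on $\partial D$, so $\alpha$ splits $D$ into subdisks $D^1, D^2$.

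In the sub-case $\beta \subset F$ (i.e., $\beta$ on an arc of $F \cap E$), form $\Delta_i = D^i \cup E_0$ (with $E_0$ pushed slightly off $E$); each $\Delta_i$ is a properly embedded disk in $M$ with boundary on $F$, and since $\partial D$ is essential on $F$, at least one of $\partial \Delta_1, \partial \Delta_2$ is essential on $F$ (otherwise $\partial D$ would bound a disk on $F$), producing a new compression disk with strictly fewer intersections with $E$ and contradicting minimality. In the sub-case $\beta \subset \partial E \subset \partial M$, the endpoints of $\alpha$ lie on $\partial F \cap \partial D$, and one uses a collar of $\partial M$ in $M$ (which contains the collar of $\partial F$ in $F$) to modify $E_0$ rel $\alpha$, pushing $\beta$ off $\partial M$ into an arc $\beta^* \subset F$ and producing a new disk $E_0^*$ with $\partial E_0^* = \alpha \cup \beta^*$; this reduces the argument to the previous sub-case. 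The main obstacle is exactly this last step: ensuring that the modified disk $E_0^*$ can be constructed without introducing new intersections with $D$ requires careful use of the collar structure together with the reducibility of $\partial M$ and the irreducibility of $M$. Once this case analysis is complete, both sub-cases contradict minimality, establishing the single-disk claim and, by induction, the theorem.
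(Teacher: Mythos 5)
Your reduction to the single-disk claim and the induction producing the maximal system are fine (the paper itself dismisses that step as ``similar''), and your treatment of circle components and of the sub-case $\beta\subset F$ coincides with the paper's Claim~1 and Claim~2. The gap is in your second sub-case, and it is essential. First, the dichotomy ``$\beta\subset F$ or $\beta\subset\partial E$'' is not exhaustive: the endpoints of an arc $\alpha$ of $D\cap E$ lie on $\partial D\subset\operatorname{Int}(F)$, hence at interior points of arcs of $E\cap F$, so $\beta$ is never contained in $\partial E$; and when the two endpoints of $\alpha$ lie on \emph{different} arcs of $E\cap F$ (which already happens when $D\cap E$ is a single arc joining two such components), every outermost region you can cut off in $E$ has its $\beta$ equal to a concatenation of sub-arcs of $E\cap F$ and sub-arcs of $\partial E$. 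Second, your proposed repair --- pushing the $\partial E$-portions of $\beta$ into $F$ via a collar of $\partial M$ --- does not go through: those sub-arcs of $\partial E$ join points of $\partial F$ and there is no reason they are parallel into $\partial F$ inside $\partial M$; the disk $D^i\cup E_0$ you would assemble has boundary partly on $F$ and partly on $\partial M$, i.e.\ it is a boundary-compression-type disk rather than a compression disk of $F$, so it cannot be fed back into the minimality of $|D\cap E|$. You flag this yourself as ``the main obstacle,'' but flagging it does not close it.

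The paper avoids the boundary of $M$ entirely by working outermost in $D$ rather than in $E$: let $\beta$ be an arc of $D\cap E$ cutting off a subdisk $\Delta\subset D$ with $\operatorname{Int}(\Delta)\cap E=\emptyset$. If both points of $\partial\beta$ lie on a single arc of $E\cap F$, one argues exactly as in your first sub-case (this is the paper's Claim~2). If they lie on two different arcs $\alpha_1,\alpha_2$ of $E\cap F$, the paper isotopes $F$ across $\Delta$ (note $\Delta\cap F$ is just the arc $\overline{\partial\Delta-\beta}\subset\partial D$, since $D\cap F=\partial D$): this replaces $\alpha_1,\alpha_2$ by two new arcs $\delta_1,\delta_2$, so $|E\cap F|$ is unchanged, while $|D\cap E|$ drops by one, contradicting minimality (understood over all positions of the pair realizing the minimal $|E\cap F|$). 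Adopting this outermost-in-$D$ step, with its isotopy of $F$ across $\Delta$, is the ingredient missing from your argument.
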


\begin{proof}
We only prove that there exists a compression disk $D$ for $F$, such that $E\cap D=\emptyset$. The proof of the existence of such a maximal system of compression disks for $F$ is similar.

Isotope $E$ in $M$ such that $E$ and $F$ are in general position, and $E\cap F$ has minimal components over all allowed possibilities. Let $D$ be a compression disk of $F$, such that $\partial D\subset \text{Int}(F)$, $D$ is in general position with respect to $E$, and $D\cap E$ has minimal components over all allowed possibilities. If $D\cap E=\emptyset$, the conclusion holds. In the following, we assume that $D\cap E\ne \emptyset$.

If $E \cap F = \emptyset$, we assert that $D\cap E=\emptyset$. Otherwise, each component of $D\cap E$ is a circle in both $D$ and $E$. Let $\alpha$ be a circle component of $D\cap E$ which is innermost in $E$, i.e. $\alpha$ bounds a disk $\Delta$ in $E$ with $\text{Int}(\Delta)\cap D=\emptyset$. $\alpha$ bounds a disk $\Delta'$ in $D$. Since $M$ is irreducible, the 2-sphere $\Delta\cup\Delta'$ bounds a 3-ball in $M$, thus we can push $\Delta'$ in $M$ by isotopy to cross $\Delta$, then $D$ is changed to $D'$, and $|D'\cap E|\leq |D\cap E|-1$, contradicting to the minimality of $|D\cap E|$.

In the following, we assume that $E \cap F \neq \emptyset$. Note that $E\cap F$ is a collection of pairwise disjoint  arcs (by the assumption), and each arc is properly embedded in both $E$ and $F$. Assume that $D\cap E\ne\emptyset$. We have the following claims.

{\bf Claim~1}. \textit{$D\cap E$ has no circle components.}

The proof is similar to the proof as above.

Thus, each component of $D\cap E$ is a simple arc $\alpha$ with $\partial\alpha\subset E\cap F$. Let $\beta$ be a component of $D\cap E$ which is outermost in $D$, i.e. $\beta$ cuts out of a subdisk $\Delta$ from $D$ with $\text{Int}(\Delta)\cap E=\emptyset$.

{\bf Claim~2}. \textit{$\partial\beta$ is lying in the different components of $E\cap F$.}

Otherwise, assume that $\partial\beta$ is lying in a component $\gamma$ of $E\cap F$,  $\partial\beta$ bounds an arc $\gamma'$ in $\gamma$ and $\beta\cup\gamma'$ bounds a disk $E'$ in $E$ with $\text{Int}(E')\cap D=\emptyset$. $\gamma$ cuts $D$ into two sub-disks $D'$ and $D''$. Denote $D_1=D'\cup E'$ and $D_2=D''\cup E'$. Then $\partial D_1, \partial D_2\subset F$, and at least one of $\partial D_1$ and $\partial D_2$, say the former, is essential in $F$ (otherwise, $\partial D$ would be trivial in $F$, contradicting to the choice of $D$),  therefore $D_1$ is a compression disk of $F$. It is clear that after a small isotopy of $D_1$ to $D_1'$ around $E'$, we have $|D_1'\cap E|<|D\cap E|$, again a contradiction to the minimality of $|D\cap E|$.

Now $\beta$ is a component of $D\cap E$ with $\Delta\cap E=\beta$, the two points of $\partial\beta$ are lying in different components $\alpha_1$ and $\alpha_2$, respectively. Push $F$ in $M$ to $\hat{F}$ by isotopy along $\Delta$, and $D$ is changed to $\hat{D}$, see Fig.~\ref{fig11}.
\begin{figure}[!htb]
	\centering
	\includegraphics[width=0.85\textwidth]{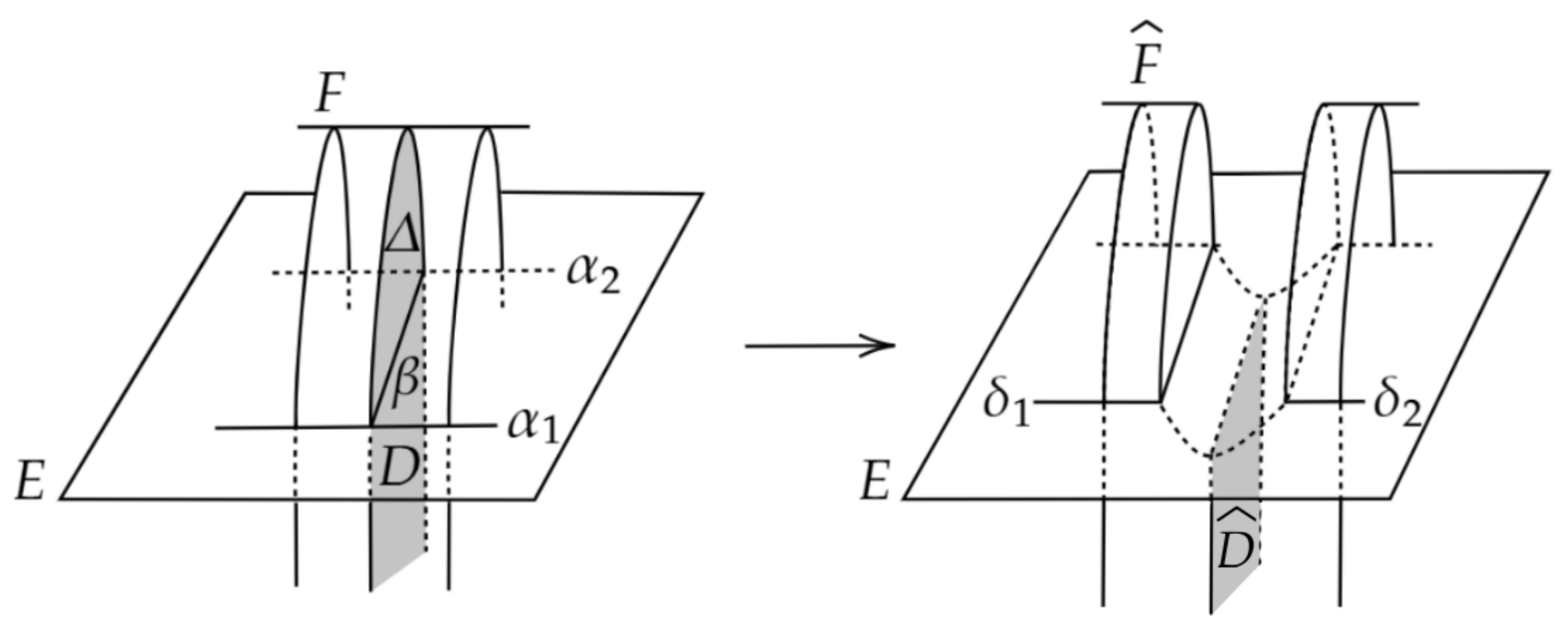}
	\caption{Push $F$ in $M$ to $\hat{F}$ by isotopy along $\Delta$.} \label{fig11}
\end{figure}
Note that the two components $\alpha_1$ and $\alpha_2$ are changed into $\delta_1$ and $\delta_2$, and all other components of $F\cap E$ keep unchanged. Thus $|E\cap \hat{F}|=|E\cap F|$, but $|\hat{D}\cap E|=|D\cap E|-1$, again contradicting to the choice of $D$.
\end{proof}


\begin{cor} \label{cc3.13}
Let $M$ be an irreducible 3-manifold with reducible boundary, and $F$ a non-trivial incompressible surface in $M$ with non-empty boundary. Suppose that $E$ is a compression disk of $\partial M$ in $M$ such that $E\cap F=\emptyset$, and $F(\alpha)$ is a band sum of $F$ along $\alpha$ such that $\alpha$ intersects $E$ in a single point. Then $F(\alpha)$ is incompressible in $M$.
\end{cor}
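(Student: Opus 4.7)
My plan is to assume $F(\alpha)$ is compressible and derive a contradiction by producing a compression disk for $F$ itself.

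First I would arrange $E$ in good position. Since $E\cap F=\emptyset$, we have $E\cap F(\alpha)=E\cap(\alpha\times\gamma)$; its 1-manifold boundary $\partial E\cap(\alpha\times\partial\gamma)$ consists of two points near $p=E\cap\alpha$, and innermost-disk surgery on $E$ (using irreducibility of $M$ together with the fact that $\alpha\times\gamma$ is a disk) removes any circle components, so after isotopy $E\cap F(\alpha)$ is a single arc $\eta$. Theorem~\ref{tt3.11}, applied with $F(\alpha)$ in place of $F$, then supplies a compression disk $D$ of $F(\alpha)$ with $D\cap E=\emptyset$.

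Next I would cut $M$ along $E$ to obtain $M^{*}=M\setminus N(E)$; both $D$ and $F$ embed in $M^{*}$ (the latter since $F\cap E=\emptyset$), while $F(\alpha)$ becomes a properly embedded surface $F^{*}\subset M^{*}$, namely $F(\alpha)$ cut open along $\eta$. The key step is to exhibit an ambient isotopy of $M^{*}$ sending $F^{*}$ onto $F$. Choose the cylinder $N=\alpha\times\Delta$ thin enough that $E\cap N$ is a single transverse disk $E_{0}\approx\{p\}\times\Delta$; then $E_{0}$ splits $N$ into two 3-balls $B_{i}=\alpha_{i}\times\Delta$ ($i=1,2$), where $\alpha_{1},\alpha_{2}$ are the two components of $\alpha\setminus\{p\}$. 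In $M^{*}$ each $B_{i}$ is still a 3-ball whose boundary sphere decomposes as the union of an ``interior'' disk $\Delta_{i}\cup R_{i}$ (with $\Delta_{1}=\Delta$, $\Delta_{2}=\Delta'$, and $R_{i}=\alpha_{i}\times\gamma$ the corresponding half of the band) and a ``boundary'' disk $(\alpha_{i}\times\beta)\cup E_{0}^{\pm}\subset\partial M^{*}$, meeting along a common circle on $\partial M^{*}$. Thus $B_{i}$ is a $\partial$-parallelism between $\Delta_{i}$ and $R_{i}$, and sliding $R_{i}$ across $B_{i}$ onto $\Delta_{i}$---keeping the shared edge pointwise fixed and letting the remainder of $\partial R_{i}$ flow across the boundary face onto the remainder of $\partial\Delta_{i}$---for $i=1,2$ yields an ambient isotopy $\phi_{t}\colon M^{*}\to M^{*}$ with $\phi_{0}=\mathrm{id}$ and $\phi_{1}(F^{*})=F$.

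The disk $\phi_{1}(D)\subset M^{*}$ then has boundary in $F$. Since $D\cap E=\emptyset$ forces $\partial D\cap\eta=\emptyset$, the curve $\partial D$ lies in the interior of $F^{*}$ away from the cut; any disk in $F^{*}$ it might bound would embed as a disk in $F(\alpha)$, contradicting essentiality of $\partial D$ in $F(\alpha)$. Hence $\partial D$ is essential in $F^{*}$ and, transported by the homeomorphism $\phi_{1}|_{F^{*}}\colon F^{*}\to F$, the curve $\phi_{1}(\partial D)$ is essential in $F$. Therefore $\phi_{1}(D)$ compresses $F$ in $M^{*}$, hence in $M$, contradicting the incompressibility of $F$; so $F(\alpha)$ must be incompressible. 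I anticipate the main obstacle is the isotopy step: verifying that $B_{1},B_{2}$ really give $\partial$-parallelisms in $M^{*}$ and combining the two local slides into a single ambient isotopy of $M^{*}$ sending $F^{*}$ precisely onto $F$.
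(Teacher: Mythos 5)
Your proof is correct and follows the same overall strategy as the paper's: assume $F(\alpha)$ is compressible, produce a compressing disk $D$ of $F(\alpha)$ with $D\cap E=\emptyset$, and convert it into a compressing disk for $F$. You differ in two places, both to your advantage. First, you obtain the disjoint disk by applying Theorem~\ref{tt3.11} to $F(\alpha)$ --- legitimate, since $E\cap F(\alpha)$ is a single arc and hence has no circle components --- whereas the paper re-runs the innermost-circle/outermost-arc minimality argument inside the corollary to rule out $D\cap E\neq\emptyset$. Second, and more substantively, the paper disposes of the case $D\cap E=\emptyset$ with the one-line assertion that ``$\partial D$ does not bound a disk in $F(\alpha)$, so $\partial D$ does not bound a disk in $F$''; as written this is a gap, because $\partial D$ need not lie on $F$ at all: it may run over the two halves of the band while staying off $E$. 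Your cut-along-$E$ argument --- in which the two product balls $B_1,B_2$ exhibit the half-bands $R_i$ as boundary-parallel to $\Delta,\Delta'$ in $M^{*}$, the disjointly supported slides combine into an ambient isotopy carrying $F^{*}$ onto $F$, and essentiality of $\partial D$ transports through the homeomorphism $F^{*}\to F$ --- supplies exactly the missing justification. One cosmetic remark: the innermost-disk clean-up of $E\cap(\alpha\times\gamma)$ is redundant once you take the band thin enough that $E\cap N=\{p\}\times\Delta$, since $F(\alpha)$ is defined only up to isotopy and incompressibility is isotopy-invariant.
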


\begin{proof}
By assumption, $F(\alpha)$ is a band sum of $F$ along $\alpha$ such that $\alpha$ intersects $E$ in a single point, thus $F(\alpha)$ and $E$ intersect with only one component of arc. If $F(\alpha)$ is compressible in $M$, choose a compressing disk $D$ of $F(\alpha)$ in $M$, such that $D$ is in general position with $E$, and $|D\cap E|$ is minimal over all such compressing disks of $F(\alpha)$ in $M$. Then $D\cap E\ne\emptyset$.  Otherwise, $D\cap E=\emptyset$, since $\partial D$ does not bound a disk in $F(\alpha)$, so $\partial D$ does not bound a disk in $F$, which implies that $F$ is a compressible in $M$, contradicting to the assumption.

Similar to the situation in the proof of Theorem \ref{tt3.9}, $D\cap E$ has no circle components. Since $F(\alpha)\cap E$ consists of only a single component $\beta$, the two ends of each component of $D\cap E$ is an simple arc $\gamma$ with $\partial\gamma\subset\beta$. By Claim~2 in the proof of Theorem~\ref{tt3.9}, this would derived a contraction that $|D\cap E|$ is minimal. Therefore such a $D$ cannot exist. This completes the proof.
\end{proof}

\end{document}